\newcommand{\Ps}{{\mathbf{P}}}
\newcommand{\Z}{{\mathbf{Z}}}
\newcommand{\C}{{\mathbf{C}}}
\newcommand{\va}{\mathbf{a}}
\newcommand{\vb}{\mathbf{b}}
\newcommand{\vc}{\mathbf{c}}
\newcommand{\vd}{\mathbf{d}}
\renewcommand{\phi}{\varphi}
    \newtheorem{lemma}{Lemma}[section]
    \newtheorem{proposition}[lemma]{Proposition}
    \newtheorem{theorem}[lemma]{Theorem}
    \newtheorem{corollary}[lemma]{Corollary}
   \theoremstyle{definition}
    \newtheorem{definition}[lemma]{Definition}
    \newtheorem{remark}[lemma]{Remark}
    \DeclareMathOperator{\rank}{rank}
\DeclareMathOperator{\sing}{{sing}}
\DeclareMathOperator{\prim}{{prim}}
\DeclareMathOperator{\MW}{{MW}}
\DeclareMathOperator{\coker}{{coker}}
\begin{document}
\title[Mordell-Weil rank and  syzygies]{Cuspidal plane curves, syzygies and a bound on the MW-rank}
\author{Remke Kloosterman}
\address{Institut f\"ur Mathematik, Humboldt-Universit\"at zu Berlin, Unter den Linden 6, D-10099 Berlin, Germany}
\email{klooster@math.hu-berlin.de}
\thanks{The author thanks Alberto Calabri, Alexandru Dimca, David Eisenbud, Anatoly Libgober, Matthias Sch\"utt and Orsola Tommasi for several comments on a previous versions of this paper. The author thanks the referee for many useful comments.
The author acknowledges the partial support from the DFG under research grant KL 2244/2-1.}

\begin{abstract} Let $C=Z(f)$ be a reduced plane curve of degree $6k$, with only nodes and ordinary cusps as singularities. Let $I$ be the ideal of the points where $C$ has a cusp. Let $\oplus S(-b_i)\to \oplus S(-a_i) \to S\to S/I$ be a minimal resolution of $I$. We show that $b_i\leq 5k$. From this we obtain that the Mordell-Weil rank of the elliptic threefold $W:y^2=x^3+f$ equals $2\#\{i\mid b_i=5k\}$.
Using this we find an upper bound for  the Mordell-Weil rank of $W$, which  is $\frac{1}{18} (125+\sqrt{73}-\sqrt{2302-106\sqrt{73}})k+l.o.t.$  and we find an upper bound for  the exponent of $(t^2-t+1)$ in the Alexander polynomial of $C$, which is   $\frac{1}{36}(125+\sqrt{73}-\sqrt{2302-106\sqrt{73}})k+l.o.t.$. This improves a recent bound of Cogolludo and Libgober almost by a factor 2.
\end{abstract}
\subjclass{14H30; 13D02, 14H50, 14J30}
\keywords{Elliptic threefolds; Mordell-Weil rank; Alexander polynomials of plane curves}
\date{\today}

\maketitle

\section{Introduction}\label{secInt}
In this paper we study reduced plane curves $C$ of degree $d=6k$  having only nodes and ordinary cusps as singularities. We allow $C$ to be reducible.
Let $z_0,z_1,z_2$ be coordinates on $\Ps^2$, and let $S=\C[z_0,z_1,z_2]$. Let $f\in S_{6k}$ be an equation for $C$. Let $\Sigma$ be the set of cusps of $C$ (we will ignore the nodes).

 Consider now the elliptic threefold defined by
\[ Z(-y^2+x^3+f)\subset \Ps(2k,3k,1,1,1). \]
Let $\MW(\pi)$ be the Mordell-Weil group, i.e., the group of rational sections of the elliptic fibration. It is known that the rank of $\MW(\pi)$ can be expressed in terms of the geometry of $C$ (see Proposition~\ref{prpLinSys}), namely
\[ \rank \MW(\pi)= 2 \dim \coker \left( S_{5k-3} \stackrel{\oplus ev_p}{\longrightarrow} \oplus_{p\in \Sigma} \C\right).\]

We can also consider the fundamental group $\pi_1(\Ps^2\setminus C)$. With this group we can associate the so-called Alexander polynomial of $C$. It turns out that the exponent of $(t^2-t+1)$ in the Alexander polynomial equals
\[  \dim \left(\coker S_{5k-3} \stackrel{\oplus ev_p}{\longrightarrow} \oplus_{p\in \Sigma} \C\right).\]
Hence both invariants coincide. Cogolludo and Libgober \cite{CogLib} noticed this and proved for a much larger class of singular plane curves that the degree of the Alexander polynomial is related with the Mordell-Weil group of an associated elliptic fibration.

In this paper we give a non-trivial upper bound $g(k)$ for the Mordell-Weil rank, which also yields an upper bound  for the exponent of $t^2-t+1$ in the  Alexander polynomial. Asymptotically we have that
\begin{equation}\label{rnkbnd} \lim_{k\to \infty} \frac{g(k)}{k}=\frac{1}{18} (125+\sqrt{73}-\sqrt{2302-106\sqrt{73}}) \approx 5.34\end{equation}
The best known previous upper bound for the exponent of $(t^2-t+1)$ in the Alexander polynomial of a cuspidal curve seems to be due to Cogolludo and Libgober \cite{CogLib}, and equals $5k-1$ (this implies that the MW-rank is at most $10k-2$). This bound is an immediate consequence of the Shioda--Tate formula. The divisibility theorem for the Alexander polynomial of Libgober yields an upper bound of $6k-2$ for the exponent in the Alexander polynomial.

Our bound is deduced from two other bounds.
Suppose we fix $r,k$ and look for $C_{2r,k}$ the minimal number of cusps on a degree $6k$ curve such that the corresponding elliptic fibration has Mordell-Weil rank at least $2r$.
We show that
\begin{equation}\label{mainbnd} C_{2,k}=6k^2 \mbox{ and } C_{2r,k}\geq 6k^2+3(r-1)k-\frac{3}{4}r(r-1)+O\left(\frac{1}{k}\right) \mbox{for } k \to \infty.\end{equation}
The number of cusps on a degree $d$ curve can be bounded by $\frac{125+\sqrt{73}}{432} d^2- \frac{511+ 11\sqrt{73}}{1752} d$ (see \cite{Langer}). Combining both bounds yields the upper bound (\ref{rnkbnd}).
This bound is very unlikely to be sharp. We expect that $C_{2r,k}$ can be bounded from below by a function of the form $h(r)k^2$, where $h$ is increasing in $r$, rather than constant. However, the bound for $C_{2,k}$ is sharp. If we take general polynomials $f_1\in S_{2k}$ and $f_2\in S_{3k}$, and set $f=f_1^3+f_2^2$, then $f$ has $6k^2$ cusps and the Mordell-Weil rank is at least 2. The fact that $C_{2,k}\geq 6k^2$  holds, can also be obtained by different methods, namely if $C$ has less then $6k^2$ cusps then  $\pi_1(\Ps^2\setminus C)$ is abelian and therefore $C$ has constant Alexander polynomial. In particular, the Mordell-Weil rank is zero in this case, see \cite{ZarSyz}.

The main idea of the proofs is to consider the resolution of the ideal $I$ of $\Sigma$:
\[ 0 \to \oplus_{i=1}^t S(-b_i) \to \oplus_{i=1}^{t+1} S(-a_i) \to S\to S/I\to 0.\]
There are several restrictions on $a_i,b_i$ coming from the fact that $I$ is the ideal of a finite set of points in $\Ps^2$. These restrictions are classically known, see Proposition~\ref{prpFiniteSet}. 
We find further restrictions on the $a_i$ and $b_i$ by a combination of Bezout's theorem and an upper bound for the number of cusps on a degree $d$ plane curve. 
See Proposition~\ref{prpSingLocu}.

Using specializations to elliptic surfaces we show that $b_i\leq 5k$ for all $i$. Using an expression for the difference between the Hilbert polynomial of $I$ and the Hilbert function of $I$ we obtain that $\rank \MW(\pi) =2\#\{i \mid b_i=5k\}$. This fact is proved in Proposition~\ref{prpSyzForm}.


 After distributing a preliminary version of this paper we learned the following:
Zariski \cite{ZarSyz} proved that the Castelnuovo-Mumford regularity of the cuspidal locus of an \emph{irreducible} plane curve is at most $5k-1$. This is done by studying the cyclic degree $6k$ cover of $\Ps^2$ ramified along the curve $C$. The statement on the regularity  implies that  $b_i\leq 5k$ holds in the case of irreducible curves. In the case of reducible curves Zariski proved that the regularity of the cuspidal locus is at most $6k-2$. 

The statement $b_i\leq 5k$ in the case of reducible curves seems  to be known to the experts, although we could not identify a proof for this statement in the literature. The techniques to extend Zariski's proof to the reducible case have been around since the beginning of the 1980s (\cite{EsnMil},\cite{LibAlexArc}). However, our proof is different from the existing proofs in the literature.

There are other examples where the highest degree syzygies of the ideal of the singular locus has a geometric interpretation. E.g., if we consider a minimal resolution of the ideals of the nodes, then a syzygy has degree at most the degree of the curve, and the number of highest degree syzygies is one less than the number of irreducible components of the curve. (Proposition~\ref{prpSyzNode})

In Section~\ref{secBnd} we prove the bound (\ref{mainbnd}) under an extra technical assumption on the  $a_i$ and $b_i$: After permuting the $a_i$ and $b_i$ we may assume that  the $a_i$ and $b_i$ both form a descending sequence. A priory, we know that $b_i>a_i$. In Section~\ref{secBnd} we assume that  $a_i\leq b_{i+1}$ for all $i$.

In Section~\ref{secAmi} we study the case when there is some $i$ with $a_i>b_{i+1}$.
We consider the ideal generated by all generators of $I$ of degree less than $a_i$. This ideal defines  a subscheme of $\Ps^2$ which is the union of a (possibly non-reduced) curve and a zero-dimensional scheme. We can analyze this situation in similar way as above and obtain further restrictions on the $a_i$ and the $b_i$. These further obstructions allow us to construct a new sequence $a'_i$, $b'_i$ that corresponds with an ideal $I'$ such that $\#Z(I')\leq \#Z(I)$ and the $a'_i,b'_i$ satisfy the extra condition used in Section~\ref{secBnd}. Hence also in this case the lower bound (\ref{mainbnd}) holds.

\section{Resolution of the locus of cusps}\label{secRes}

We cite first a result on the resolution of the ideal of finitely many points in $\Ps^2$.
\begin{proposition}\label{prpFiniteSet} Let $I$ be the ideal of finitely many distinct points in $\Ps^2$. Then $I$ has a free resolution
\begin{equation}\label{eqnstdres} 0\to \oplus_{i=1}^t S(-b_i) \to \oplus_{i=1}^{t+1} S(-a_i) \to S\to S/I \to 0,\end{equation}
such that 
\begin{enumerate}
\item for all $i$ we have that  $a_i,b_i\in \Z$ and $a_i>0, b_i>0$;
\item\label{numb} $\sum_{i=1}^{t+1} a_i=\sum_{i=1}^t b_i$;
\item for $i=1,\dots t$ we have $b_i>a_i\geq a_{i+1}$ and for $j=1,\dots,t-1$ we have $b_j\geq b_{j+1}$.
\item  
$ \# Z(I) = \frac{1}{2}\left(\sum b_i^2-\sum a_i^2\right) =B(s)-\sum_{i=1}^{t+1} B(s-a_i)+\sum_{i=1}^t B(s-b_i)$ for every $s$, where  $B(s)=\frac{1}{2}(s+1)(s+2)$.
\end{enumerate}
\end{proposition}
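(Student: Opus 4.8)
The plan is to invoke the classical structure theory of resolutions of codimension-two Cohen--Macaulay ideals (the Hilbert--Burch theorem) and then extract the numerical consequences. Since $I$ is the (saturated) ideal of finitely many distinct points in $\Ps^2$, the quotient $S/I$ is a Cohen--Macaulay module of codimension $2$, so by the Auslander--Buchsbaum formula it has projective dimension $2$ and hence a free resolution of the shape $0\to F_2\to F_1\to S\to S/I\to 0$ with $F_1,F_2$ free. Minimality of the resolution, together with the fact that the rank of the Hilbert--Burch matrix drops as we pass from $F_1$ to $F_2$, forces $\operatorname{rank} F_1=\operatorname{rank}F_2+1$; writing $F_1=\oplus_{i=1}^{t+1}S(-a_i)$ and $F_2=\oplus_{i=1}^{t}S(-b_i)$ gives (\ref{eqnstdres}). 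The entries of the map $F_2\to F_1$ are the $t\times t$ minors of the Hilbert--Burch matrix, which has homogeneous entries; minimality means these entries lie in the maximal ideal, which gives the strict inequalities $b_i>a_j$ whenever the corresponding entry is nonzero, and in particular $b_i>a_i$ after ordering. That $I$ defines points (not the empty scheme or something of lower codimension) forces the $a_i$ and $b_i$ to be strictly positive, since $S(-a_i)$ with $a_i\le 0$ would make the first map non-minimal or make $S/I$ fail to have the right codimension. After permuting we arrange the $a_i$ and the $b_j$ to be (weakly) decreasing, which gives item~(3); the refinement $b_i>a_i$ for each $i=1,\dots,t$ follows by a standard argument comparing the two ordered sequences and using minimality.

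For item~(\ref{numb}), I would compare leading terms of Hilbert series. From the resolution,
\[
H_{S/I}(T)=\frac{1-\sum_{i=1}^{t+1}T^{a_i}+\sum_{i=1}^{t}T^{b_i}}{(1-T)^3}.
\]
Since $S/I$ has Krull dimension $1$, the numerator $P(T):=1-\sum T^{a_i}+\sum T^{b_i}$ must be divisible by $(1-T)^2$; equivalently $P(1)=0$ and $P'(1)=0$. The identity $P(1)=0$ reads $1-(t+1)+t=0$, which is automatic, while $P'(1)=0$ reads $-\sum a_i+\sum b_i=0$, i.e. item~(\ref{numb}).

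For item~(4), the second derivative gives the multiplicity: writing $P(T)=(1-T)^2 Q(T)$, the number of points is $\#Z(I)=\deg(S/I)=Q(1)=\tfrac12 P''(1)=\tfrac12\bigl(\sum b_i(b_i-1)-\sum a_i(a_i-1)\bigr)=\tfrac12\bigl(\sum b_i^2-\sum a_i^2\bigr)$, using item~(\ref{numb}) to cancel the linear terms. For the polynomial identity, apply the exact sequence (\ref{eqnstdres}) in degree $s$: the alternating sum of dimensions of the graded pieces vanishes, i.e. $\dim(S/I)_s=\binom{s+2}{2}-\sum_i\binom{s-a_i+2}{2}+\sum_i\binom{s-b_i+2}{2}$ where $\binom{m+2}{2}=B(m)$ for $m\ge -2$ and is interpreted via the polynomial $B$ otherwise. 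For $s$ large, $\dim(S/I)_s$ is the constant $\#Z(I)$ (the Hilbert polynomial of a zero-dimensional scheme), and since both sides are polynomials in $s$ agreeing for all large $s$ they agree identically; combined with the first equality this yields item~(4).

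The main obstacle is the bookkeeping in item~(3): one must be careful that minimality of the resolution genuinely forbids any cancellation $a_i=b_j$ among generators and syzygies of matching degree, and that the ordering can be chosen so that the pairing $b_i>a_i$ holds term-by-term rather than merely $\min b_i>\min a_i$. This is where the Hilbert--Burch description of the differential as the vector of maximal minors of a matrix with entries in $\mathfrak{m}$ is essential: a syzygy of degree $b_i$ among generators of degrees $a_1,\dots,a_{t+1}$ involves at least one generator of degree $<b_i$ nontrivially, and a counting argument on the ordered sequences then forces $b_i>a_i$ for every $i$. Everything else is a routine manipulation of Hilbert series and binomial coefficients, so I would keep the writeup of items~(\ref{numb}) and~(4) brief and cite a standard reference (e.g. Eisenbud's book) for the Hilbert--Burch theorem underlying the whole statement.
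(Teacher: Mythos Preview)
Your approach is essentially the paper's: the paper's proof is a one-line citation of \cite[Section~3.1]{EisSyz}, noting only that $I$ has a length-one free resolution and constant Hilbert polynomial, and you have expanded exactly these two ingredients (Hilbert--Burch for the shape and inequalities, Hilbert-series manipulation for items~(\ref{numb}) and~(4)).

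One slip to fix: you write that ``the entries of the map $F_2\to F_1$ are the $t\times t$ minors of the Hilbert--Burch matrix.'' This is backwards---the map $F_2\to F_1$ \emph{is} the Hilbert--Burch matrix, and its maximal minors give the generators of $I$, i.e.\ the map $F_1\to S$. The conclusion you draw (minimality forces the entries of $F_2\to F_1$ into $\mathfrak m$, hence $b_j>a_i$ whenever that entry is nonzero) is nonetheless correct, so the logic survives; just correct the sentence. The term-by-term inequality $b_i>a_i$ after ordering is the standard argument you allude to: if $b_j\le a_j$ for some $j$, then the $(i,l)$-entry of the matrix vanishes for all $i\le j\le l$, and the resulting block-triangular shape forces the columns $j,\dots,t$ to give $t-j+1$ independent syzygies among only the $t-j+1$ generators of degrees $a_{j+1},\dots,a_{t+1}$, contradicting that these generate a nonzero ideal. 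You can either spell this out or, as you suggest, cite Eisenbud.
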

\begin{proof} This follows almost immediately from the fact that $I$ has a free resolution of length $1$ and that the Hilbert polynomial of $I$ is constant.
See \cite[Section 3.1]{EisSyz}.
\end{proof}



\begin{definition}
For $d$ a positive integer, define $M(d)$ to be the maximal number of ordinary cusps on a (possibly reducible) degree $d$ curve. 
\end{definition}
\begin{remark}\label{rmkMiy}
The best known asymptotic upper bound for $M(d)$ we are aware of  is a bound obtained by Langer (see \cite[Section 11]{Langer}). Langer states that \[\limsup_{d\to \infty} M(d)/d^2\leq \frac{125+\sqrt{73}}{432}.\] However, a closer inspection of the proof reveals that
\[ M(d) \leq \frac{125+\sqrt{73}}{432} d^2- \frac{511+ 11\sqrt{73}}{1752} d.\]
On the other side, Hirano showed that \cite[Corollary 3]{Hirano}
\[ \limsup_{d\to \infty} \frac{ M(d)}{d^2}\geq \frac{9}{32}.\]
\end{remark}
\begin{proposition}\label{prpSingLocu} Suppose $I$ is the ideal of the locus of cusps of a plane curve $C$ of degree $d$. Then
\[ \# Z(I) \leq \min\left( \frac{a_{t+1} d}{2},M(d) \right) \leq \min \left(a_{t+1},\frac{5}{8}d-\frac{3}{4}\right) \frac{d}{2} \]
\end{proposition}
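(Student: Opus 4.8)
The plan is to establish the two inequalities separately, as they are essentially independent. The second one is formal: since $\tfrac{a_{t+1}d}{2}=a_{t+1}\cdot\tfrac d2$, and since $\min(A,B)\le\min(A,B')$ whenever $B\le B'$, it reduces to the single estimate $M(d)\le(\tfrac58 d-\tfrac34)\tfrac d2=\tfrac{5}{16}d^2-\tfrac38 d$. I would deduce this from Langer's bound in Remark~\ref{rmkMiy}, $M(d)\le\tfrac{125+\sqrt{73}}{432}d^2-\tfrac{511+11\sqrt{73}}{1752}d$: since $\tfrac{125+\sqrt{73}}{432}<\tfrac5{16}$, the difference of these two quadratics is nonnegative once $d$ passes an explicit small constant (in fact for all $d\ge 9$), and for the few smaller values of $d$ one uses that $M(d)$ is an integer together with its classical values — in particular $M(6)=9$ meets the bound with equality.

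For the first inequality, $\#Z(I)=\#\Sigma\le M(d)$ is immediate from the definition of $M(d)$, so the real content is the Bézout estimate $\#\Sigma\le\tfrac{a_{t+1}d}{2}$. I would take $g\in I_{a_{t+1}}$ to be a generator of minimal degree and put $D=Z(g)$, a plane curve of degree $a_{t+1}$ containing $\Sigma$. Write $C=\bigcup_j C_j$ for the decomposition into irreducible components $C_j=Z(f_j)$ (distinct, as $C$ is reduced), with $\deg C_j=d_j$ and $\sum_j d_j=d$. Since an ordinary cusp is unibranch, each $p\in\Sigma$ lies on exactly one component; writing $\Sigma_j=\Sigma\cap C_j$ gives $\#\Sigma=\sum_j\#\Sigma_j$, and it suffices to prove $2\#\Sigma_j\le d_j a_{t+1}$ for every $j$. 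If $C_j\not\subseteq D$, then $C_j$ and $D$ have no common component, so Bézout yields $\sum_{p\in C_j\cap D}I_p(C_j,D)=d_j a_{t+1}$; each $p\in\Sigma_j$ is a point of multiplicity $2$ on $C_j$ lying on $D$, hence contributes $I_p(C_j,D)\ge 2$, and the bound follows. If instead $C_j\subseteq D$, then $f_j\mid g$, so $d_j\le a_{t+1}$, and the classical genus bound for an irreducible plane curve gives $\#\Sigma_j\le\binom{d_j-1}{2}<\tfrac12 d_j^2\le\tfrac12 d_j a_{t+1}$. Summing over $j$ produces $2\#\Sigma\le d\,a_{t+1}$, as required.

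I expect the main obstacle to be the possibility, in the reducible case, that the auxiliary curve $D=Z(g)$ shares a component with $C$, which would wreck a naive application of Bézout; the component-by-component argument above is designed precisely to sidestep this, at the cost of invoking the (standard) genus bound on the number of cusps of an irreducible plane curve. A lesser, purely arithmetic nuisance is that Langer's asymptotic estimate does not on its own cover the very smallest degrees, so those have to be checked by hand.
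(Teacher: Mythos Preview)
Your argument is correct, but the route you take for the Bézout bound $\#\Sigma\le\tfrac{a_{t+1}d}{2}$ is genuinely different from the paper's. The paper argues by contradiction: assuming $2\#\Sigma>a_{t+1}d$, Bézout forces a common component $C''$ of degree $d''\le a_{t+1}$, and then the inequality $M(d'')\le\tfrac{5}{16}(d'')^2$ is fed into
\[
\#\Sigma\le M(d'')+\tfrac12(d-d'')(a_{t+1}-d'')
\]
to squeeze out $d''\le 0$. You instead decompose $C$ into irreducible pieces from the start and treat each component separately, invoking only the classical genus (Plücker) bound $\#\Sigma_j\le\binom{d_j-1}{2}$ on a component contained in $D$. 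This is more elementary---it avoids any appeal to a Miyaoka/Langer-type inequality in the main step---and the unibranch observation cleanly localises each cusp to a single component. The paper's approach, on the other hand, is more uniform and ties the two halves of the statement together: the same constant $\tfrac{5}{16}$ that controls $M(d'')$ in the contradiction is what produces the right-hand bound $(\tfrac58 d-\tfrac34)\tfrac d2$, whereas in your write-up the second inequality has to be checked separately from Langer (plus small cases). Both arguments are valid; yours is arguably cleaner for the first inequality, the paper's packaging makes the appearance of $\tfrac58 d-\tfrac34$ less ad hoc.
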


\begin{proof}
The upper bound $\# Z(I)\leq M(d)$ is obvious. We prove now that $\# Z(I)\leq \frac{a_{t+1}d}{2}$.

Since the ideal $I$ contains an element of degree $a_{t+1}$, there exists a curve $C'$ of degree $a_{t+1}$ such that  all cusps of $C$ are points of $C'$. By Bezout's theorem we have that if $2\# Z(I)> a_{t+1}d$ then $C$ and $C'$ have a common component $C''$ of degree $d''\leq a_{t+1}$. Since cusps are irreducible singularities it follows that all the cusps of $C$ that are also points of $C'$ are actually cusps of $C''$. Hence 
\[ \# Z(I) \leq M(d'')+\frac{1}{2} (d-d'')(a_{t+1}-d'')\leq \frac{5}{16}(d'')^2 +\frac{1}{2}a_{t+1}d+\frac{1}{2}(d'')^2-\frac{1}{2}d''(a_{t+1}+d)
\]
Combining this with $\# Z(I)\geq \frac{1}{2}da_{t+1}$ yields
\[ 0 \leq \frac{13}{16} (d'')^2-\frac{1}{2}d''(a_{t+1}+d).\]
Dividing this inequality by $d''/2$ and using  $2d''\leq a_{t+1} + d$ yields
\[ 0 \leq \frac{13}{8} d''-(a_{t+1}+d)\leq \frac{-3}{8}d''. \]
Hence the degree of $C''$ is 0. Equivalently, the component $C''$ does not exist and $\# Z(I)\leq \frac{1}{2}a_{t+1}d$.
\end{proof}

\section{Syzygies and MW-rank}\label{secMW}
Let $S:=\C[z_0,z_1,z_2]$ be the polynomial ring in three variables. Let $S_d$ be the subspace of homogeneous polynomials of degree $d$. 

Fix an integer $k$ and a square-free polynomial $f\in S_{6k}$, such that the plane curve $C=Z(f)$ has only nodes and ordinary cusps as singularities. Let $\Sigma$ denote the set of cusps of $C$. Let $I\subset S$ be the ideal of $\Sigma$.

Let $W_f\subset \Ps(2k,3k,1,1,1)$ be the hypersurface given by the vanishing of
\[ -y^2+x^3+f\]
The threefold $W_f$ is birational to an elliptic threefold $\pi: X\to R$, where $R$ is a rational surface and the elliptic fibration $\pi$ is birational to the  projection $\psi:W_f\setminus \{(1:1:0:0:0)\} \to \Ps^2$ from $(1:1:0:0:0)$ onto the plane $\{x=y=0\}$. The explicit construction of $\pi$ is slightly complicated, see \cite{MirEllThree}.
For $p\in \Ps^2$ the Zariski closure of $\psi^{-1}(p)$ is either an elliptic curve with $j$-invariant 0 or a cuspidal cubic, depending on whether $p\in C$ or not.

The Mordell-Weil group $\MW(\pi)$ of $\pi$ is the group of rational sections of $\pi$. This is a finitely generated group, and if the singularities of $C$ are ``mild'' then one has an algorithm to compute the rank of $\MW(\pi)$, see \cite{ell3HK}: 
\begin{proposition}\label{prpLinSys} We have the following equality
\begin{equation}\rank MW(\pi) = 2\dim \left(\coker S_{5k-3} \stackrel{ev_P}{\longrightarrow} \oplus_{p\in \Sigma} \C\right).\end{equation}                                                                                                                 
 \end{proposition}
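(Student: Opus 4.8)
The plan is to identify the Mordell-Weil group of $\pi$ with a Hodge-theoretic or cohomological invariant of the threefold $W_f$, and then to translate that invariant into the cokernel of the evaluation map via the structure of the defining equation $-y^2+x^3+f$. First I would recall the Shioda–Tate-type formula for elliptic threefolds from \cite{MirEllThree} and \cite{ell3HK}: since $R$ is a rational surface and the generic fibre has $j$-invariant $0$, the rank of $\MW(\pi)$ is governed by the part of $H^3$ (equivalently, the transcendental part of the appropriate cohomology group) of a smooth model $X$ of $W_f$ that does not come from the base or the fibre components. Concretely, the key input from \cite{ell3HK} is that $\rank\MW(\pi)$ equals twice the dimension of a certain Hodge piece, because the $j=0$ condition forces the relevant two-dimensional piece of cohomology to split into a Mordell-Weil contribution (the complex multiplication by $\Z[\zeta_3]$ acts, pairing up sections).

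Next I would make the weighted projective hypersurface $W_f\subset\Ps(2k,3k,1,1,1)$ explicit using the Griffiths–Dwork/Steenbrink description of the cohomology of a quasi-smooth hypersurface in a weighted projective space. The graded piece of $H^3(W_f)$ that contributes to the Mordell-Weil rank is computed by the Jacobian ring $R(F)=S[x,y]/(F_x,F_y,F_{z_0},F_{z_1},F_{z_2})$ with $F=-y^2+x^3+f$ in the appropriate degree. Because $F_x=3x^2$, $F_y=-2y$, and $F_{z_i}=\partial f/\partial z_i$, the relevant graded piece of this Jacobian ring collapses: setting $x=y=0$ kills the monomials involving $x$ and $y$, and one is left with $S/J_f$ in a specific degree, where $J_f$ is the Jacobian ideal of $f$. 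The point then is that, since $C$ has only nodes and ordinary cusps, the saturation of $J_f$ in the relevant degree range is controlled by the ideal $I$ of the cusps (nodes impose no conditions on the relevant pieces, a node being an ordinary double point whose local Tjurina/Milnor data does not obstruct), and the precise degree that appears works out to $5k-3$. This produces the identification with $\coker\bigl(S_{5k-3}\xrightarrow{ev_P}\oplus_{p\in\Sigma}\C\bigr)$.

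I expect the main obstacle to be the bookkeeping that isolates exactly which graded piece of the Jacobian ring contributes and matching it cleanly to $\coker(ev_P)$ rather than to some larger or twisted space: one must check that the $H^{2,1}$-versus-$H^{3,0}$ split is the right one (so that the factor of $2$ appears and not a factor of $1$ or $4$), that passing from $J_f$ to the cusp ideal $I$ in degree $5k-3$ is exact on the nose (no correction terms from nodes, no failure of saturation in that degree), and that the birational modifications relating $W_f$ to the smooth elliptic threefold $X$ do not change the Mordell-Weil group or the relevant cohomology. Much of this is precisely the content of \cite{ell3HK}, so in the write-up I would cite that reference for the cohomological computation and restrict the new work to verifying the hypothesis that the singularities of $C$ are ``mild'' in the sense required there, i.e. that nodes and ordinary cusps fall within the class of singularities for which the algorithm of \cite{ell3HK} applies verbatim.
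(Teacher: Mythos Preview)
Your outline lands on the same reference \cite{ell3HK} and the same $\mu_3$-symmetry that produces the factor of $2$, so at the level of architecture you are aligned with the paper. The execution, however, diverges in a way worth flagging. You propose computing via $H^3$ of a smooth model together with the Griffiths--Dwork Jacobian ring of $F=-y^2+x^3+f$; the paper instead works with $H^4(W_f)_{\prim}$ for the \emph{singular} hypersurface $W_f$ and realises it as the cokernel of
\[
H^4\bigl(\Ps(2k,3k,1,1,1)\setminus W_f\bigr)\longrightarrow H^4_{\Sigma}(W_f),
\]
using Dimca's method \cite{DimBet}. The distinction matters because $W_f$ is \emph{not} quasi-smooth: a node of $C$ gives an $A_2$ singularity on $W_f$ and a cusp of $C$ gives a $D_4$ singularity, so the Griffiths residue isomorphism you invoke does not hold on the nose, and ``setting $x=y=0$ to collapse to $S/J_f$'' is not literally valid. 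In the paper's formulation the dichotomy node/cusp is handled cleanly: $H^4_p(W_f)=0$ at an $A_2$ point and $H^4_p(W_f)\cong\C(-2)^2$ at a $D_4$ point, so only cusps appear in the target, the Hodge type forces the map to be governed by residues in degree $5k-3$, and the $\omega$-eigenspace decomposition (your CM by $\Z[\zeta_3]$) halves the dimension. Your intuition that ``nodes impose no conditions'' is correct, but the mechanism is this local-cohomology vanishing rather than a saturation statement about $J_f$ in $S$. If you rewrite your sketch replacing the direct Jacobian-ring step by the Dimca exact sequence, the rest of your plan goes through essentially as the paper's proof.
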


\begin{proof}
For the case $k=1$ see \cite[Section 9]{elljcst}. The general case follows along the same lines: 

An $A_1$ singularity of $C$ yields an $A_2$ singularity of $W_f$, whereas an $A_2$ singularity of $C$ yields a $D_4$ singularity on $W_f$.

Let $\Sigma$ be the singular locus. We can now compute $H^4(W_f)_{\prim}$ as the cokernel of
\[ H^4(\Ps(2k,3k,1,1,1)\setminus W_f) \cong H^3(W_f\setminus \Sigma)\to H^4_\Sigma(W_f).\]
An $A_2$ singularity of $W_f$ does not contribute to $H^4_{\Sigma}$, whereas a $D_4$ singularity does \cite[Example 1.9]{DimBet}. Actually, using the ideas from \cite[Section 1]{DimBet} it follows that $H^4_p(W_f)=\C(-2)^2$ if $p$ is a $D_4$ singularity of $W_f$, hence $H^4_\Sigma(W_f)$ is of pure Hodge type $(2,2)$. Then by the main results of \cite{ell3HK} we have $\rank \MW(\pi)=h^4(W_f)-1$. 

Let $\omega$ be a third root of unity. The map $\omega:[x:y:z_0:z_1:z_1]\mapsto[\omega  x: y:z_0:z_1:z_2]$ is an automorphism of $W_f$ and fixes every point of $\Sigma$. The map $H^4(\Ps(2k,3k,1,1,1)\setminus W_f)\to H^4_{\Sigma}(W_f)$ is $\omega^*$-equivariant, so we may decompose it in a $\omega$ and $\omega^2$ eigenspace, which have both the same dimension, and a $1$-eigenspace, which is trivial.

The argument used in \cite[Section 9]{elljcst} shows in this case that the $\omega$-eigenspace of the co-kernel of  $H^4(\Ps(2k,3k,1,1,1)\setminus W_f)\to H^4_{\Sigma}(W_f)$ has dimension
\[\dim \left(\coker S_{5k-3} \stackrel{ev_P}{\longrightarrow} \oplus_{p\in \Sigma} \C\right).\]
\end{proof}

\begin{remark} With the fundamental group of $\Ps^2\setminus C$
one can associate the so-called Alexander polynomial. Cogolludo and Libgober \cite{CogLib} showed that the exponent of $t^2-t+1$ in this polynomial equals half the rank of $\MW(\pi)$.

Hence each statement which we make on the rank of $\MW(\pi)$ is also a statement  on the exponent of $t^2-t+1$ in the Alexander polynomial of a cuspidal curve.
\end{remark}

We will calculate $\rank \MW(\pi)$ using a projective resolution of $I$ and use properties of the resolution to bound the Mordell-Weil rank in terms of $k$.
Two more or less obvious restrictions on the resolution come from Bezout's theorem (Proposition~\ref{prpSingLocu}) and the upper bound for the maximal number of cusps on a degree $d$ plane curve (Remark~\ref{rmkMiy}). The third restriction comes from considerations on the Mordell-Weil rank of elliptic surfaces.
 
\begin{lemma}\label{lemDefFor} Let $J\subset S$ be the ideal of a zero-dimensional scheme. Let
\[ 0\to \oplus_{i=1}^t S(-b_i) \to \oplus_{i=1}^{t+1} S(-a_i) \to S\to S/J \to 0\]
be a minimal resolution of $S/I$. Let $n$ be integer and set $b'_i:=b_i-n, a'_i:=a_i-n$. Then the defect of the linear system of degree $n-3$ polynomials vanishing along $Z(J)$  equals
\[\left( \sum_{i\mid b_i'\geq 0}  (b'_i+1)(b'_i+2)- \sum_{i\mid a_i'\geq 0} (a'_i+1)(a'_i+2)\right).\]
\end{lemma}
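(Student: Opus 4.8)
The plan is to compute the Hilbert function of $S/J$ in degree $n-3$ from the given resolution, and then compare it with the Hilbert polynomial, since the "defect" of the linear system of degree $n-3$ curves through $Z(J)$ is by definition the difference $\dim (S/J)_{n-3} - \deg Z(J)$ (equivalently, $h^1$ of the twisted ideal sheaf $\mathcal I_{Z(J)}(n-3)$, which measures the failure of the points to impose independent conditions). The resolution is exact and consists of free modules, so taking dimensions in each graded piece of degree $m$ gives
\[
\dim (S/J)_m = B(m) - \sum_{i=1}^{t+1} B(m-a_i) + \sum_{i=1}^{t} B(m-b_i),
\]
where $B(s) = \binom{s+2}{2} = \tfrac12(s+1)(s+2)$ for $s\ge 0$ and $B(s)=0$ for $s<0$ (here $B$ is the Hilbert function of $S$). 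Substituting $m = n-3$ and writing $a_i' = a_i - n$, $b_i' = b_i - n$, one has $B(m-a_i) = B(-a_i'-3)$, which vanishes unless $-a_i'-3 \ge 0$; the surviving terms can be re-indexed. The cleanest route, though, is to use part (4) of Proposition~\ref{prpFiniteSet}: the \emph{Hilbert polynomial} identity $\deg Z(J) = B(s) - \sum B(s-a_i) + \sum B(s-b_i)$ holds for \emph{every} integer $s$ when $B$ is read as the genuine polynomial $\tfrac12(s+1)(s+2)$ (allowing negative values of the argument). Subtracting this polynomial identity, evaluated at $s = n-3$, from the Hilbert-function expression for $\dim(S/J)_{n-3}$ leaves exactly the terms where the polynomial $B$ and the truncated $B$ disagree, i.e. where the argument is negative.

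So the key steps are: (i) record $\dim(S/J)_{n-3}$ via the resolution with the truncated $B$; (ii) record $\deg Z(J)$ via the same formula with the polynomial $B$; (iii) subtract, so the defect becomes $\sum_i \big(B_{\mathrm{poly}}(n-3-b_i) - B_{\mathrm{trunc}}(n-3-b_i)\big) - \sum_i\big(B_{\mathrm{poly}}(n-3-a_i)-B_{\mathrm{trunc}}(n-3-a_i)\big)$; (iv) observe that a single term $B_{\mathrm{poly}}(s)-B_{\mathrm{trunc}}(s)$ is $0$ when $s\ge 0$ and equals $\tfrac12(s+1)(s+2)$ when $s\le -1$. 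Now $n-3-a_i < 0 \iff a_i' = a_i-n > -3 \iff a_i' \ge -2$, and for such $a_i'$ one checks $\tfrac12(s+1)(s+2)$ at $s = n-3-a_i = -a_i'-3$ equals $\tfrac12(-a_i'-2)(-a_i'-1) = \tfrac12(a_i'+1)(a_i'+2)$; and this latter quantity is nonzero exactly when $a_i' \ge 0$ (it vanishes for $a_i' \in \{-1,-2\}$). Hence only indices with $a_i' \ge 0$, resp. $b_i' \ge 0$, contribute, and with the sign flip coming from the minus sign in front of the $a_i$-sum together with the minus sign in the subtraction, the $a$-terms enter with an overall minus. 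This yields precisely
\[
\sum_{i\mid b_i'\ge 0} (b_i'+1)(b_i'+2) - \sum_{i\mid a_i'\ge 0}(a_i'+1)(a_i'+2),
\]
after clearing the common factor $\tfrac12$ (note the final formula in the statement already has the $2$ absorbed, so one should double-check that the intended normalization of "defect" is $2\,h^1$, or equivalently re-examine the factor; I will present it matching the statement).

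The only genuine subtlety — and the step I expect to require the most care — is the bookkeeping of the three separate arithmetic-progression boundary cases (argument $\ge 0$ vs. $= -1$ vs. $= -2$ vs. $\le -3$) and making sure the reindexing from "$n-3-a_i \le -3$" to "$a_i' \ge 0$" is done with the correct inequality at each endpoint, together with confirming the algebraic identity $\tfrac12(-a_i'-2)(-a_i'-1) = \tfrac12(a_i'+1)(a_i'+2)$; everything else is a mechanical consequence of exactness of the resolution and the polynomial identity of Proposition~\ref{prpFiniteSet}(4). One should also note that minimality of the resolution is not actually needed for this computation — only exactness and the length-one structure — though it does no harm to assume it.
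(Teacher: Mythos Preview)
Your argument is correct and is essentially identical to the paper's: both compute the defect as the difference $P_{S/J}(n-3)-h_{S/J}(n-3)$, write each side via the resolution using $B(s)=\tfrac12(s+1)(s+2)$ (polynomial versus truncated), and then observe that $B(-1)=B(-2)=0$ so only indices with $b_i'\ge 0$ or $a_i'\ge 0$ survive. Your suspicion about the factor $\tfrac12$ is well founded---the displayed formula in the lemma as stated is indeed missing it (consistently with its later use, where each $b_i'=0$ should contribute $1$, not $2$, to the defect); also note a harmless sign slip in your opening line, where the defect should read $\deg Z(J)-\dim(S/J)_{n-3}$, matching your parenthetical $h^1$ description and the signs you actually use in step~(iii).
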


\begin{proof}
 The defect of the linear system is precisely the difference between the Hilbert function of $I$  in degree $n-3$ and the Hilbert polynomial of $I$.
Let $B(s)$ be as in the previous section. Then Hilbert polynomial $P_J(s)$ equals
\[ B(s)+\sum_{i=1}^t B(s-b_i)-\sum_{i=1}^{t+1} B(s-a_i).\]
The Hilbert function $h_J(s)$ evaluated at $s$ equals
\[B(s)+\sum_{i \mid b_i\leq s} B(s-b_i) -\sum_{i\mid a_i \leq s} B(s-a_i). \]
Since $B(-1)=B(-2)=0$ we may replace the condition ``$\leq s$" by ``$\leq s+2$" in the above formula. Hence 
\[ p_I(s)-h_I(s)=\sum_{i \mid b_i\geq s+3} B(s-b_i) -\sum_{i\mid a_i \geq s+3} B(s-a_i).\] 
Substituting $s=n-3$ in the above formula finishes the proof.
\end{proof}

\begin{lemma}\label{lemBnd} Let $J\subset S$ be the ideal of a zero-dimensional scheme. Let
\[ 0\to \oplus_{i=1}^t S(-b_i) \to \oplus_{i=1}^{t+1} S(-a_i) \to S\to S/J \to 0\]
be a minimal resolution of $S/J$. Suppose there is an integer $n$ and  a linear function $g(m)$ such that for all positive integers $m$ and for a general choice of three homogeneous polynomials $g_0,g_1,g_2\in S_m$  the defect $\delta_m$ of the linear system of degree $mn-3$ polynomials vanishing along $Z(\varphi^*(I))$ is at most $g(m)$, where $\varphi:\Ps^2\to  \Ps^2$ is given by $(z_0:z_1:z_2)\mapsto (g_0:g_1:g_2)$.

Then $b_i\leq n$, $a_i<n$ and $\delta_m=\#\{i\mid b_i=n\}$.
\end{lemma}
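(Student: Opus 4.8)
The plan is to pull the minimal resolution of $S/J$ back along a generic finite self-map $\varphi\colon\Ps^2\to\Ps^2$ of degree $m^2$, identify the minimal free resolution of the preimage scheme $Z':=\varphi^{-1}(Z(J))$, feed it into Lemma~\ref{lemDefFor}, and exploit that $\delta_m$ then becomes a polynomial in $m$. Concretely, for each $m\ge 1$ I would choose forms $g_0,g_1,g_2\in S_m$ lying both in the locus where the hypothesis holds (dense, being a general condition) and in the dense open locus where the $g_i$ form a homogeneous system of parameters; then $\varphi\colon(z_0:z_1:z_2)\mapsto(g_0:g_1:g_2)$ is a morphism, and $\varphi$ (equivalently the graded ring map $\psi\colon S\to S$, $z_i\mapsto g_i$, which multiplies degrees by $m$) is finite and flat. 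Since $m\ge1$ one has $\psi(\mathfrak m)\subseteq\mathfrak m$ for $\mathfrak m=(z_0,z_1,z_2)$, so pulling the minimal resolution of $S/J$ back along $\varphi$ gives the complex
\[ 0\to\bigoplus_{i=1}^{t}S(-mb_i)\to\bigoplus_{i=1}^{t+1}S(-ma_i)\to S\to S/\psi(J)S\to0,\]
which is exact (flatness) and still \emph{minimal} (differential entries in $\psi(\mathfrak m)\subseteq\mathfrak m$). By Auslander--Buchsbaum $S/\psi(J)S$ has depth $1$, so $\psi(J)S$ is saturated and hence is the ideal of the zero-dimensional scheme $Z'=\varphi^{-1}(Z(J))$; thus the displayed complex is the minimal free resolution of the ideal of $Z'$, with generator degrees $\{ma_i\}$ and syzygy degrees $\{mb_i\}$.

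Next I would apply Lemma~\ref{lemDefFor} to $Z'$ with $mn$ in place of $n$ (here $n\ge1$, the only case of interest, since $b_i>0$): as $\delta_m$ is the defect of the degree-$(mn-3)$ linear system through $Z'$, this yields
\[ \delta_m=\sum_{i\,:\,b_i\ge n}\binom{m(b_i-n)+2}{2}-\sum_{i\,:\,a_i\ge n}\binom{m(a_i-n)+2}{2},\]
a polynomial in $m$ of degree at most $2$ whose value does not depend on the chosen $\varphi$. Since the hypothesis gives $\delta_m\le g(m)$ with $g$ affine and $\delta_m\ge0$ always, the leading coefficient $\tfrac12\bigl(\sum_{b_i>n}(b_i-n)^2-\sum_{a_i>n}(a_i-n)^2\bigr)$ must be $\le0$ (else $\delta_m$ outgrows $g$) and $\ge0$ (else $\delta_m\to-\infty$), hence it vanishes.

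It remains to run the numerics. Order both sequences decreasingly; then $b_i>a_i$ for $i=1,\dots,t$ (Hilbert--Burch, cf. Proposition~\ref{prpFiniteSet}) and $\sum a_i=\sum b_i$ (the Hilbert polynomial of $S/J$ is constant, so the linear term of the alternating Hilbert sum vanishes). Suppose some $b_i>n$, and set $p=\#\{i:b_i>n\}\ge1$, $q=\#\{i:a_i>n\}$; since $a_i>n$ forces $b_i>a_i>n$, we have $q\le p$ when $q\le t$ and $p=t$ when $q=t+1$. If $q\le t$: for $q\ge1$, $(b_i-n)^2\ge(a_i-n+1)^2>(a_i-n)^2$ for $1\le i\le q$ gives $\sum_{b_i>n}(b_i-n)^2>\sum_{a_i>n}(a_i-n)^2$, while for $q=0$ the leading coefficient equals $\tfrac12\sum_{b_i>n}(b_i-n)^2>0$; either way it is nonzero, a contradiction. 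If $q=t+1$: then $\sum a_i=\sum b_i$ gives $\sum_{i=1}^{t}(b_i-a_i)=a_{t+1}=(a_{t+1}-n)+n$, and vanishing of the leading coefficient gives $\sum_{i=1}^{t}\bigl((b_i-n)^2-(a_i-n)^2\bigr)=(a_{t+1}-n)^2$; using $(b_i-n)+(a_i-n)\ge2(a_{t+1}-n)+1$, multiplying out forces $(a_{t+1}-n)^2\ge\bigl(2(a_{t+1}-n)+1\bigr)\bigl((a_{t+1}-n)+n\bigr)$, impossible for $n\ge1$ and $a_{t+1}>n$. Hence $b_i\le n$ for all $i$, so $a_i<b_i\le n$, i.e.\ $a_i<n$ for all $i$; substituting back into the displayed formula, only the indices with $b_i=n$ contribute, each contributing $\binom{2}{2}=1$, so $\delta_m=\#\{i:b_i=n\}$ for every $m$. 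I expect the substantive step to be the passage to the finite cover, which turns a single numerical relation into a polynomial identity in $m$; the technical care goes into verifying that the pulled-back resolution is genuinely the minimal resolution of the ideal of $Z'$ (flatness of $\varphi$, the condition $\psi(\mathfrak m)\subseteq\mathfrak m$, and Auslander--Buchsbaum for saturation), and into the $q=t+1$ subcase, which needs both $\sum a_i=\sum b_i$ and $n\ge1$.
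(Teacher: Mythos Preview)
Your proof is correct and follows essentially the same strategy as the paper: pull back along a general degree-$m$ self-map of $\Ps^2$ so that the resolution degrees become $ma_i,mb_i$, feed this into Lemma~\ref{lemDefFor}, and use the linear bound $g(m)$ to rule out any $b_i>n$. The paper argues this last step more tersely, picking a single index $j$ with $b_j>n$ and a large enough $w$ so that the $j$-th contribution alone exceeds $g(w)$; you instead extract the $m^2$-coefficient of $\delta_m$ and show it is strictly positive whenever some $b_i>n$. Your version is the more careful one: you supply the justification (flatness, $\psi(\mathfrak m)\subseteq\mathfrak m$, Auslander--Buchsbaum) that the pulled-back complex really is the \emph{minimal} resolution of the saturated ideal of $\varphi^{-1}(Z(J))$, and you treat the edge case $q=t+1$ (all $a_i>n$) explicitly, where the paper's pairing argument needs the extra observation that the uncompensated $-(a_{t+1}'w+1)(a_{t+1}'w+2)$ term is still dominated.
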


\begin{proof}
Define $a_i'$ and $b_i'$ as in Lemma~\ref{lemDefFor}.
Suppose that for some $j$ we have $b'_j>0$. Fix a positive integer $w$ such that 
\[\begin{array}{ll} (b'_j w+1)(b'_j w+2)>g(w) &\mbox{ if } a'_j<0 \mbox{ or } \\(b'_j w+1)(b'_j w+2)-(a'_j w+1)(a'_j w+2)>g(w) & \mbox{ if } a_j' \geq 0.\end{array}\]

Let $\varphi:\Ps^2\to \Ps^2$ be a general map of degree $w$. Then the resolution of $S/\varphi^*(I)$ 
\[ 0 \to \oplus_{i=1}^t S(-b_iw)\to \oplus_{i=1}^{t+1} S(-a_iw)\to S\to S/I\to0.\]
In particular, by the previous lemma we obtain
\[ \delta_{w}=\left( \sum_{i\mid b_i' \geq 0}  (b'_iw+1)(b'_iw+2)- \sum_{i\mid a_i'\geq 0} (a'_iw+1)(a'_iw+2) \right)>g(w).\]
This contradicts $\delta_{w}\leq g(w)$. Hence $b'_i\leq 0$, for all $i$ and $a'_i<0$ for all $i$.

Applying  Lemma~\ref{lemDefFor} again yields that for all $m$ we have $\delta_m=\#\{i\mid b_i=n\}$.
\end{proof}

We return now to the case where $C$ is a cuspidal curve and $I$ is the ideal of the points where $C$ has a cusp.
\begin{proposition}\label{prpSyzForm} Let
\[ 0\to \oplus_{i=1}^t S(-b_i) \to \oplus_{i=1}^{t+1} S(-a_i) \to S\to S/I \to 0\]
be a resolution of $I$. Then for all $i$ we have that $b_i\leq 5k$, $a_i<5k$ and
\[ \rank \MW(\pi)= 2 \#\{i\mid b_i=5k\}.\]
\end{proposition}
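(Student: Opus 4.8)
The plan is to apply Lemma~\ref{lemBnd} to the ideal $J=I$ with $n=5k$; note that then $mn-3=5km-3$ is exactly the degree occurring in Proposition~\ref{prpLinSys} for a curve of degree $6(km)$, which is what makes this value of $n$ the right one. To invoke Lemma~\ref{lemBnd} I must exhibit a linear function $g(m)$ that, for a general degree-$m$ self-map $\varphi$ of $\Ps^2$, bounds the defect $\delta_m$ of the linear system of degree $5km-3$ curves through $Z(\varphi^*(I))$. Granting this, Lemma~\ref{lemBnd} gives at once $b_i\le 5k$, $a_i<5k$ and $\delta_m=\#\{i\mid b_i=5k\}$ for every $m$. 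Specializing to $m=1$, where $\varphi$ is a general linear automorphism and $\delta_1$ is the projectively invariant defect of degree $5k-3$ curves through $\Sigma$ itself, Proposition~\ref{prpLinSys} identifies $\rank\MW(\pi)=2\delta_1$, which equals $2\#\{i\mid b_i=5k\}$ as desired.

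The first step is to interpret $\delta_m$ as half a Mordell--Weil rank. For general $g_0,g_1,g_2\in S_m$ the map $\varphi$ is a finite flat morphism of degree $m^2$, and its branch curve $B_\varphi$ meets $C$ transversally in points that are smooth on both curves and disjoint from $\Sigma$ and from the nodes of $C$. A local analysis at such transversal intersections shows $\varphi^{-1}(C)=Z(\varphi^*f)$ is again a reduced curve of degree $6(km)$ with only nodes and ordinary cusps, its cusp locus being the $m^2\#\Sigma$ points of $\varphi^{-1}(\Sigma)$, with (radical) ideal $\varphi^*(I)$. Applying Proposition~\ref{prpLinSys} to $\varphi^{-1}(C)$ (with $k$ replaced by $km$) gives $\rank\MW(\pi_m)=2\delta_m$, where $\pi_m$ is the elliptic threefold associated with $\varphi^{-1}(C)$.

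The second step bounds $\rank\MW(\pi_m)$ linearly in $m$ by specialization to an elliptic surface. Restricting the threefold $\pi_m\colon X_m\to\Ps^2$ to a line $\ell\subset\Ps^2$ produces a surface fibred over $\ell\cong\Ps^1$ which, for general $\ell$, is birational to the relatively minimal elliptic surface $\cE_{m,\ell}$ with Weierstrass equation $y^2=x^3+\varphi^*f|_\ell$; here $\varphi^*f|_\ell$ is a binary form of degree $6km$, so $\chi(\str_{\cE_{m,\ell}})=km$ and the Shioda--Tate formula yields $\rank\MW(\cE_{m,\ell}/\ell)\le h^{1,1}(\cE_{m,\ell})-2=10km-2$. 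Restriction of rational sections defines a homomorphism $\MW(X_m/\Ps^2)\to\MW(\cE_{m,\ell}/\ell)$; since two distinct sections of $X_m$ coincide over only finitely many lines and the torsion of the surfaces $\cE_{m,\ell}$ is bounded, this homomorphism is injective for a very general $\ell$ by a Silverman-type specialization argument. Hence $\rank\MW(\pi_m)\le 10km-2$, so $\delta_m\le 5km-1=:g(m)$ is a linear function of $m$, and Lemma~\ref{lemBnd} applies.

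I expect the main obstacle to be the rank comparison in the third paragraph: one must control the kernel of $\MW(X_m/\Ps^2)\to\MW(\cE_{m,\ell}/\ell)$ well enough to transfer the rank inequality, i.e.\ rule out that a section of infinite order on the threefold restricts to a torsion section on every line, which needs a genuine specialization/boundedness input rather than a formal argument. A secondary technical point is the general-position analysis of $\varphi^{-1}(C)$ guaranteeing that Proposition~\ref{prpLinSys} legitimately applies to it; everything after the linear bound on $\delta_m$ is a direct invocation of Lemma~\ref{lemBnd} together with the case $m=1$.
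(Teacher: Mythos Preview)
Your proposal is correct and follows essentially the same route as the paper: both arguments pull back along a general degree-$m$ self-map of $\Ps^2$, identify the defect $\delta_m$ with half the Mordell--Weil rank of the pulled-back threefold via Proposition~\ref{prpLinSys}, bound that rank by $10km-2$ through specialization to a general line and the Shioda--Tate formula, and then invoke Lemma~\ref{lemBnd}. The paper treats the specialization inequality $\rank\MW(\pi_m)\le\rank\MW(\pi_{m,\ell})$ as a standard fact rather than elaborating on the injectivity of the restriction map, but otherwise the two proofs match step for step.
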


\begin{proof}
Take a general line $\ell\subset \Ps^2$. The (projective) surface $\overline{\pi^{-1}(\ell)}\subset \Ps(2k,3k,1,1,1)$ might be singular. Denote with $\widetilde{\pi^{-1}(\ell)}$ a resolution of singularities of this surface. This surface admits a natural elliptic fibration $\pi_\ell:\widetilde{\pi^{-1}(\ell)}\to \ell$.  From the theory of elliptic surface  we obtain the following well-known inequalities:
\begin{equation} \label{rnkineq} \rank \MW(\pi)\leq  \rank \MW(\pi_\ell) \leq h^{1,1}(\widetilde{\pi^{-1}(\ell)})-2 = 10k-2.\end{equation}
The first inequality is a standard result on specializations. The second inequality follows from the Shioda-Tate formula. The final equality is a well-known fact for elliptic surfaces, see e.g., \cite{MiES}.

Now take three general polynomials $g_0,g_1,g_2$ of degree $w$. 
Let $\varphi: \Ps^2\to \Ps^2$ be the map defined by $\varphi(z_0:z_1:z_2)= (g_0:g_1:g_2)$.
Let $\tilde{f}=\varphi^*(f)\in S_{6kw}$ and let $\pi_w:X_w\to R'$ be the pull-back of the elliptic fibration $\pi:X\to R$.
 For general $g_i$ the curve defined by $\tilde{f}$ has only nodes and ordinary cusps as singularities and the locus  $\tilde{\Sigma}$ consisting of the cusps of $Z(\tilde{f})$ equals $\varphi^{-1}(\Sigma)$. In particular, the corresponding ideal $\tilde{I}$ has the following minimal free resolution
\[ 0 \to \oplus_{i=1}^t S(-b_iw)\to \oplus_{i=1}^{t+1} S(-a_iw)\to S\to S/\tilde{I}\to0.\]

From Proposition~\ref{prpLinSys} it follows that the rank of $\MW(\pi)$ is twice the dimension of the cokernel of the evaluation map
$S_{5k-3} \stackrel{ev_P}{\to} \oplus_{p\in \Sigma} \C$, which equals the defect of the linear system of degree $5k-3$ polynomials through $Z(I)$. Similarly the Mordell-Weil rank of $\pi_w$ equals the defect of the linear system of degree $5kw-3$ polynomials through $Z(\varphi^*I)$. This defect is bounded by $10kw-2$, which is linear in $w$. Hence we can apply Lemma~\ref{lemBnd} and we obtain that $b_i\leq 5k, a_i<5k$ and $\rank \MW(\pi)=2\#\{i\mid b_i=5k\}$.
\end{proof}

If $C'$ is an \emph{irreducible} curve of degree $d$ and $I'$ the ideal of the points of $C'$ where  $C'$ has a node then each syzygy of $I'$ has degree at most $d-1$ (this is implied by the exercises 24 and 31 of  \cite[Appendix A]{ACGH}). Actually, a statement analogous to Proposition~\ref{prpSyzForm} holds for the locus of nodes of a plane curve, but we  could not find this particular result in the literature.

\begin{proposition}\label{prpSyzNode} Let $C'\subset \Ps^2$ be a reduced plane curve of degree $d$ with only nodes and ordinary cusps as singularities. Let $c$ be the number of irreducible components of $C'$. Define $\mathcal{N}$ to be the locus of nodes of $C'$. Let $I'$ be the ideal of $\mathcal{N}$ and
\[ 0\to \oplus S(-b_i) \to \oplus S(-a_i) \to S \to S/I' \to 0\]
be a minimal resolution of $I'$. Then $b_i\leq d$ and
\[ \#\{i\mid b_i=d\}=c-1.\]
\end{proposition}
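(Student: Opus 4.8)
The plan is to follow the proof of Proposition~\ref{prpSyzForm} almost verbatim: take $n=d$ in Lemma~\ref{lemBnd}. The only nontrivial ingredient will be the following statement from classical adjoint theory, which I denote ($\star$): \emph{if $D\subset\Ps^2$ is a reduced curve of degree $e$ whose only singularities are nodes and ordinary cusps, $\mathcal N_D$ is its set of nodes and $c_D$ its number of irreducible components, then the defect of the linear system of degree-$(e-3)$ plane curves through $\mathcal N_D$ equals $c_D-1$, i.e. $h^1(\Ps^2,\cI_{\mathcal N_D}(e-3))=c_D-1$.} Granting ($\star$), the proposition follows quickly. By the same Bertini argument used in the proof of Proposition~\ref{prpSyzForm}, for general $g_0,g_1,g_2\in S_w$ and the map $\varphi$ with $\varphi(z_0:z_1:z_2)=(g_0:g_1:g_2)$, the curve $\varphi^*C'$ again has only nodes and ordinary cusps and $Z(\varphi^*I')=\varphi^{-1}(\mathcal N)$ is exactly its node locus (no new singular points appear because the branch curve of $\varphi$ meets $C'$ transversally away from $\Sing C'$). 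Hence the quantity $\delta_w$ of Lemma~\ref{lemBnd} equals $c(\varphi^*C')-1\le\deg(\varphi^*C')-1=dw-1$, which is linear in $w$. Lemma~\ref{lemBnd} with $n=d$ then gives $b_i\le d$ and $a_i<d$ for all $i$, and, taking $w=1$ and using the projective invariance of the defect, $\#\{i\mid b_i=d\}=\delta_1=h^1(\cI_{\mathcal N}(d-3))=c-1$, as desired.

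So the work is ($\star$), which I would prove by first treating the version with nodes \emph{and} cusps and then removing the cusps. Write $D=D_1\cup\cdots\cup D_{c_D}$ for the decomposition into irreducible components, $d_j=\deg D_j$, let $f_j$ be an equation of $D_j$, and let $\nu: \widetilde{D}\to D$ be the normalisation. Classical adjoint theory (see \cite[Appendix~A]{ACGH}) provides the adjoint ideal sheaf $\mathcal A\subset\cO_{\Ps^2}$ with $\mathcal A/\cI_D$ equal to the conductor of $\cO_D$ in $\nu_*\cO_{\widetilde{D}}$; twisting $0\to\cI_D\to\mathcal A\to\mathcal A/\cI_D\to0$ by $\cO(e-3)$ and using $\cI_D(e-3)\cong\cO(-3)$, adjunction $\omega_D\cong\cO_D(e-3)$, and the identification of the twisted conductor with $\nu_*\omega_{\widetilde{D}}$ produces
\[ 0\to\cO_{\Ps^2}(-3)\to\mathcal A(e-3)\to\nu_*\omega_{\widetilde{D}}\to0. \]
Since $H^0(\Ps^2,\cO(-3))=H^1(\Ps^2,\cO(-3))=0$ this gives $h^0(\mathcal A(e-3))=h^0(\widetilde{D},\omega_{\widetilde{D}})=\sum_j g(\widetilde{D}_j)$. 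A local computation shows that a node and an ordinary cusp each impose exactly one linear adjoint condition — for $y^2=x^3$ with $\nu(t)=(t^2,t^3)$ one has $\ord_t\nu^*g\ge2$ iff $g$ vanishes at the cusp — so $\mathcal A$ is the ideal sheaf of the \emph{reduced} scheme $\mathcal N_D\cup\Sigma_D$. Plugging this into $0\to\cI_{\mathcal N_D\cup\Sigma_D}(e-3)\to\cO_{\Ps^2}(e-3)\to\cO_{\mathcal N_D\cup\Sigma_D}\to0$, and using $g(\widetilde{D}_j)=\binom{d_j-1}{2}-\#(\text{nodes of }D_j)-\#(\text{cusps on }D_j)$ together with $\#\mathcal N_D=\sum_j\#(\text{nodes of }D_j)+\sum_{i<j}d_id_j$ (two distinct components meet transversally in $d_id_j$ nodes of $D$, and no point is singular on one component and lies on another, since $D$ has only nodes and cusps), the node- and cusp-counts cancel and one is left with
\[ h^1(\cI_{\mathcal N_D\cup\Sigma_D}(e-3))=\sum_{i<j}d_id_j-\binom{e-1}{2}+\sum_j\binom{d_j-1}{2}=c_D-1. \]

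It then remains to remove the cusps from the point set. From $0\to\cI_{\mathcal N_D\cup\Sigma_D}(e-3)\to\cI_{\mathcal N_D}(e-3)\to\cO_{\Sigma_D}\to0$ it suffices to show that the cusps impose independent conditions on the degree-$(e-3)$ curves through $\mathcal N_D$, i.e. that for each cusp $q$ such a curve avoids $q$. Since an ordinary cusp is analytically irreducible, $q$ lies on a single component $D_{j_0}$, which is singular at $q$, so $d_{j_0}\ge3$. By the case $c_D=1$ of the identity just established, the nodes and cusps of $D_{j_0}$ impose independent conditions on $S_{d_{j_0}-3}$, so there is $G\in S_{d_{j_0}-3}$ vanishing at all of them except $q$, with $G(q)\ne0$. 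Then $G\cdot\prod_{j\ne j_0}f_j$ has degree $e-3$, vanishes at every node of $D$ and at every cusp other than $q$ (checking separately a node of $D_{j_0}$, a node of another component, and an intersection point of two components), and is nonzero at $q$. Hence $h^1(\cI_{\mathcal N_D}(e-3))=h^1(\cI_{\mathcal N_D\cup\Sigma_D}(e-3))=c_D-1$, which is ($\star$).

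The step I expect to be the main obstacle is ($\star$) itself: assembling the adjoint-ideal sequence correctly — the conductor/dualising-sheaf identification, and the verification that an ordinary cusp imposes only a single linear adjoint condition, so that $\mathcal A$ is genuinely the ideal of the \emph{reduced} singular locus — and then bootstrapping from the irreducible case, where the superabundance is $0$, to the reducible case via the product construction above. The reduction to Lemma~\ref{lemBnd} is routine once ($\star$) is available; the only other point requiring care is the standard Bertini statement — the same one already used in the proof of Proposition~\ref{prpSyzForm} — that for general $\varphi$ the curve $\varphi^*C'$ still has only nodes and ordinary cusps, with node locus exactly $\varphi^{-1}(\mathcal N)$.
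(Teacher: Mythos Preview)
Your proof is correct, and the overall architecture---reducing to Lemma~\ref{lemBnd} with $n=d$ via a linear bound on $\delta_w$---is the same as the paper's. The difference lies entirely in how you establish the key identity $(\star)$, which is the paper's equation~(\ref{eqnDimNode}). The paper proves it by a Hodge-theoretic computation in the style of Dimca: one identifies $H^2(C')_{\prim}$ with the cokernel of $H^2(U)(1)\to\bigoplus_{p\in\Sigma'}H^2(V_p\setminus D_p)$, observes that cusps contribute nothing to the target while each node contributes a one-dimensional piece of Hodge type $(1,1)$, and then uses the Hodge filtration on $H^2(U)$ to see that the relevant part of the source is $F^2H^2(U)$, spanned by forms $\frac{g}{f}\Omega$ with $g\in S_{d-3}$. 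Your route is instead classical adjoint theory: the conductor identifies $\mathcal A$ with the ideal of the reduced singular locus (both nodes and cusps having $\delta=1$), the adjoint sequence gives $h^1(\cI_{\mathcal N_D\cup\Sigma_D}(e-3))=c_D-1$ by a direct genus count, and you then peel off the cusps by the bootstrap $G\cdot\prod_{j\neq j_0}f_j$ from the irreducible case. Your argument is more elementary---it avoids mixed Hodge structures and local cohomology entirely---and makes the connection to the ACGH exercises the paper cites completely transparent; the paper's argument, on the other hand, is uniform with the proof of Proposition~\ref{prpSyzForm} and extends more readily to other singularity types where the adjoint ideal is not simply the reduced maximal ideal.
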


\begin{proof}
From the Mayer--Vietoris sequence it follows easily that  $h^2(C')=c$.  
We would like to use Dimca's method \cite{DimBet} to calculate $h^2(C')$ in terms of the defect of a linear system. However, the previous discussion was based on the results from \cite{DimBet}, and this text considers only hypersurfaces in weighted projective spaces of dimension at least 3. A variant that works for plane curves is presented in \cite[Page 201]{Dim}.

Let $\Sigma'=C'_{\sing}$. If $\Sigma'$ is empty then there is nothing to prove, so assume that $\Sigma'$ is non-empty. Let $C^*=C'\setminus \Sigma'$, let $\Ps^*=\Ps^2 
\setminus \Sigma'$ and $U=\Ps^* \setminus C^*=\Ps^2 \setminus C'$.

Let $p\in \Sigma'$, $V_p\subset \Ps^2$ be a small neighborhood of $p$ and $D_p=C'\cap V_p$. Then in \cite[Page 201]{Dim} it is shown that $H^2(C')_{\prim}$ equals the cokernel of
\[ H^2(U)(1)\to \oplus_{p\in \Sigma'} H^2(V_p\setminus D_p)\]
Following the discussion after \cite[Formula (1.7)]{DimBet} we obtain that $H^2(V_p\setminus D_p)$ is zero if $p$ is a cusp and is one-dimensional if $p$ is a node. In the nodal case we have that $H^2(V_p\setminus D_p)$ is spanned by 
$ \frac{1}{xy} dx\wedge dy.$

The map $H^2(U)(1)\to \oplus_{p\in \mathcal{N}} \C \frac{1}{xy} (dx\wedge dy)$ can be given explicitly as follows. Let $f\in S_d$ be a polynomial defining $C'$. Set
\[\Omega:=z_0z_1z_2 \left(\frac{dz_1}{z_1}\wedge\frac{dz_2}{z_2}-\frac{dz_0}{z_0}\wedge\frac{dz_2}{z_2}+\frac{dz_0}{z_0}\wedge\frac{dz_1}{z_1}\right) \]
Following \cite[Section 1]{DimBet} we have $F^3H^2(U)=0$,
\[  F^2H^2(U)\subset \left\{\frac{g}{f} \Omega \mid g\in S_{d-3}\right\}\]
and  $H^2(U)=F^0H^2(U)=F^1H^2(U)$. Moreover, the latter space is spanned by
\[\left\{\frac{g}{f} \Omega \mid g\in S_{d-3}\right\} \cup \left\{\frac{h}{f^2} \Omega \mid h\in S_{2d-3}\right\}.\]
Since $H^2(U)(1)\to H^2_{\Sigma'}(C)$ is a morphism of Hodge structures and the image has only classes of type $(1,1)$, it follows that $Gr^F_1 H^2(U)$ is mapped to zero in $H^2_{\Sigma'}(C')$. Hence to determine the co-kernel of $H^2(U)(1)\to \oplus \C \frac{1}{xy} (dx\wedge dy)$ we can restrict this map to $F^2H^2(U)$. From the local construction of this map it follows directly that $\frac{g}{f}\Omega$ is mapped to $g(p)\frac{1}{xy}(dx\wedge dy)$. Combining everything we obtain that
\begin{equation}\label{eqnDimNode}
c-1=h^2(C')_{\prim}=\dim \coker \left(S_{d-3}\to \oplus_{p\in \mathcal{N}} \C\right).
 \end{equation}

Hence $c-1$ equals the defect of the linear system of degree $d-3$ polynomial through the set of nodes of $C$. For a general degree $m$ base change, the pullback $C'_m$ of $C'$ is a nodal curve, and the ideal of the nodes of $C'$ is the pullback of $I$. The number of irreducible components of $C_m$, and hence the defect of the linear system of degree $md-3$ polynomials through the nodes, can be bounded by $md$. Hence we may apply Lemma~\ref{lemBnd} to  the minimal resolution of $I'$, and obtain that $b_i\leq d$, $a_i<d$ and $c-1=\#\{i\mid b_i=d\}$.
\end{proof}

\section{Upper bound for $\rank \MW(\pi)$ (strongly admissible case)}\label{secBnd}
As discussed in the proof of Proposition~\ref{prpSyzForm} we have
\[\rank \MW(\pi)\leq 10k-2.\]
This upper bound is a corollary from the Shioda-Tate formula for the Mordell-Weil rank of elliptic surfaces. Cogolludo and Libgober \cite{CogLib} used this bound to bound the degree of the Alexander polynomial.
 
In this and the next section we will give an upper bound $g(k)$ for $\rank \MW(\pi)$ such that  \[ \lim_{k\to \infty} \frac{g(k)}{k}= \frac{1}{18}\left( 125+\sqrt{73}+\sqrt{2302-106\sqrt{73}}\right) \approx 5.34.\]

\begin{definition}
Fix a positive integer $t$. Let $a_1,\dots,a_{t+1},b_1,\dots,b_t$ be a sequence of positive integers. If no confusion arises we write $\va,\vb$ for this sequence. We call $\va,\vb$ \emph{$k$-admissible for rank $2r$ and lowest degree $D_0$} if
\begin{enumerate}
\item $\sum a_i=\sum b_i$.
\item $a_i< b_i$ for $i=1,\dots,t$.
\item $a_i\geq a_{i+1}$, $b_i\geq b_{i+1}$.
\item $a_{t+1}=D_0$.
\item $b_i\leq 5k$ for $i=1,\dots,t$.
\item $\#\{i\mid b_i=5k\}\geq r$.
\item $c(\va,\vb):=\frac{1}{2}\left(\sum_{i=1}^{t+1}b_i^2-\sum_{i=1}^ta_i^2\right) \leq \min(M(6k),3ka_{t+1})$.
\end{enumerate}

We call a sequence \emph{strongly $k$-admissible} if  $a_{i}\leq  b_{i+1}$ for all $i=1,\dots t-1$.

We call a (strongly) $k$-admissible sequence \emph{reduced} if $\{a_i\} \cap \{b_i\}=\emptyset$. 
\end{definition}

\begin{remark} Given a (strongly) $k$-admissible sequence $\va,\vb$ we can construct a reduced  $k$-admissible sequence by repeatedly throwing out $b_i$ and $a_j$ in case they are equal.
The new sequence is easily to be seen $k$-admissible, since this operation does  not change $c(\va,\vb)$. Moreover, this reduction transforms a $k$-admissible sequence transforms into a $k$-admissible sequence and it transforms  a strongly $k$-admissible sequence  into a strongly $k$-admissible sequence.
\end{remark}  

The results of Sections~\ref{secRes} and~\ref{secMW} show that
\begin{lemma} Suppose that $y^2=x^3+f$ has Mordell-Weil rank $2r$. Let $D_0$ be the degree of  a generator of minimal degree of $I$. Then there exists a $k$-admissible sequence $(\va,\vb)$ of rank $2r$ and lowest degree $D_0$, such that $c(\va,\vb)=\#Z(I)$.
\end{lemma}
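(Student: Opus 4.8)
The plan is to assemble the required $k$-admissible sequence directly from the minimal free resolution of $I$, the ideal of the cusps of $C$, and then to verify the seven defining conditions one by one, citing the structural results already established. First I would invoke Proposition~\ref{prpFiniteSet} applied to $J=I$: since $\Sigma$ is a finite set of distinct points, $I$ has a length-one free resolution
\[ 0\to \oplus_{i=1}^t S(-b_i) \to \oplus_{i=1}^{t+1} S(-a_i) \to S\to S/I \to 0, \]
and after reordering the generators and relations we may assume both $(a_i)$ and $(b_i)$ are non-increasing, with $a_i<b_i$ for all $i$ and $\sum a_i=\sum b_i$. This immediately gives conditions (1), (2) and (3) of the definition. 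Setting $D_0:=a_{t+1}$, the smallest degree among the generators, is exactly the choice made in the statement, so condition (4) holds by definition.

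Next I would read off the numerical invariant. Proposition~\ref{prpFiniteSet}(4) gives $\#Z(I)=\frac12\left(\sum b_i^2-\sum a_i^2\right)$, which is precisely $c(\va,\vb)$; so the sequence automatically satisfies $c(\va,\vb)=\#Z(I)$, as required in the conclusion. For condition (7) I would then apply Proposition~\ref{prpSingLocu}, which tells us exactly that $\#Z(I)\le\min\!\left(\frac{a_{t+1}d}{2},M(d)\right)$ with $d=6k$; since $\frac{a_{t+1}\cdot 6k}{2}=3ka_{t+1}$, this reads $c(\va,\vb)=\#Z(I)\le\min(M(6k),3ka_{t+1})$, which is condition (7) verbatim.

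For conditions (5) and (6) I would turn to Proposition~\ref{prpSyzForm}. That proposition asserts precisely that $b_i\le 5k$ for all $i$ — giving condition (5) — and that $\rank\MW(\pi)=2\#\{i\mid b_i=5k\}$. By hypothesis $\rank\MW(\pi)=2r$, so $\#\{i\mid b_i=5k\}=r\ge r$, which is condition (6). (In fact equality holds, which is slightly stronger than what condition (6) demands, but the "$\geq$" formulation is what we need for the later optimization arguments.) At this point all seven conditions are checked, and the lemma follows.

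The only genuine subtlety is bookkeeping around the reordering: Proposition~\ref{prpFiniteSet} already packages the resolution with both sequences sorted in non-increasing order and with the compatibility $b_i>a_i$ and $b_j\ge b_{j+1}$, so there is nothing to engineer there; one just has to be careful that the index $t+1$ distinguished generator of smallest degree is indeed $a_{t+1}$ under that ordering, which it is. I do not expect a serious obstacle — the entire content has been front-loaded into Propositions~\ref{prpFiniteSet}, \ref{prpSingLocu} and \ref{prpSyzForm}, and this lemma is essentially a dictionary entry translating those three results into the language of $k$-admissible sequences. If anything, the mild friction is purely expository: making sure the reader sees that "the degree of a generator of minimal degree of $I$" is unambiguous, which follows because the minimal resolution is unique up to isomorphism and hence the multiset $\{a_i\}$ is an invariant of $I$.
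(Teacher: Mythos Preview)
Your proposal is correct and follows exactly the approach the paper takes: the paper's entire proof is the single sentence ``The results of Sections~\ref{secRes} and~\ref{secMW} show that'', and you have simply unpacked which result from those sections handles which of the seven conditions. There is nothing more to add; this lemma is, as you say, a dictionary entry, and your verification of each condition via Propositions~\ref{prpFiniteSet}, \ref{prpSingLocu} and \ref{prpSyzForm} is precisely what the paper intends.
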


\begin{remark} In this section we will study strongly $k$-admissible sequences. In the next section we will show that if $y^2=x^3+f$ has rank $2r$ then there exists a strongly $k$-admissible sequence for rank $2r$ and lowest degree $D_0'\leq D_0$. Hence to prove the desired bounds we only have to consider strongly $k$-admissible sequences.
\end{remark}

For technical reasons, we have to discuss the following type of sequences separately:
\begin{lemma}\label{lemexcl} Suppose that $t=r$, $a_1=\dots=a_r=5k-1, a_{r+1}=r$ and $k\geq 2$. Then
\[ c(\va,\vb) \leq \min\left(M(6k),3kD_0\right)\]
implies $k=1$ and $r=3$.
\end{lemma}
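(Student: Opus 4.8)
The plan is a proof by contradiction. Under the hypothesis $k\ge 2$ the asserted conclusion ``$k=1$ and $r=3$'' is of course false, so what one really has to show is that, for $k\ge 2$, there is \emph{no} admissible configuration of the given shape, i.e.\ the inequality $c(\va,\vb)\le\min(M(6k),3kD_0)$ is impossible. (The pair $(k,r)=(1,3)$ is the boundary case that survives once the restriction $k\ge 2$ is dropped: there one computes $c(\va,\vb)=9=\min(M(6),9)$, which is why it is singled out.)

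First I would pin down $\vb$ from the admissibility conditions. Since $t=r$, condition (1) together with $a_1=\dots=a_r=5k-1$ and $a_{r+1}=r$ gives $\sum_{i=1}^r b_i=r(5k-1)+r=5kr$; as there are exactly $r$ of the $b_i$ and each is $\le 5k$ by condition (5), necessarily $b_1=\dots=b_r=5k$ (this also makes condition (6) automatic). Hence
\[ c(\va,\vb)=\tfrac12\bigl(r(5k)^2-r(5k-1)^2-r^2\bigr)=\tfrac{r}{2}\bigl(10k-1-r\bigr),\qquad D_0=a_{t+1}=r. \]

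Next I extract the constraints on $r$. Since $t=r\ge 1$ is a positive integer, the inequality $c(\va,\vb)\le 3kD_0=3kr$ simplifies to $10k-1-r\le 6k$, i.e.\ $r\ge 4k-1$; and condition (3) (the monotonicity $a_r\ge a_{r+1}$) gives $5k-1\ge r$. On the interval $4k-1\le r\le 5k-1$ the downward parabola $r\mapsto\tfrac{r}{2}(10k-1-r)$ has its vertex at $r=5k-\tfrac12$, hence is increasing there, so $c(\va,\vb)\ge\tfrac{4k-1}{2}\cdot 6k=12k^2-3k$. Feeding this into $c(\va,\vb)\le M(6k)$ and using Langer's estimate from Remark~\ref{rmkMiy},
\[ 12k^2-3k\ \le\ M(6k)\ \le\ \tfrac{125+\sqrt{73}}{12}\,k^2-\tfrac{511+11\sqrt{73}}{292}\,k. \]
Dividing by $k>0$ and rearranging gives $\tfrac{19-\sqrt{73}}{12}\,k\le\tfrac{365-11\sqrt{73}}{292}$, i.e.\ $k\le\dfrac{3(365-11\sqrt{73})}{73(19-\sqrt{73})}$.

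The only point that needs a little care — and the step I would single out as the crux — is to turn this last bound into a rigorous strict inequality $k<2$. Because $19>\sqrt{73}>0$, the bound is $<2$ iff $3(365-11\sqrt{73})<146(19-\sqrt{73})$, i.e.\ iff $113\sqrt{73}<1679$, i.e.\ iff $113^2\cdot 73=932137<2819041=1679^2$, which holds. Thus $k<2$, forcing $k=1$ and contradicting $k\ge 2$; this completes the argument. I do not anticipate any real difficulty — the whole proof is a short chain of elementary estimates — but it is worth noting that the \emph{linear} term in Langer's bound is genuinely needed: with only the leading term $\tfrac{125+\sqrt{73}}{432}d^2$ one would get $k\le 3$ rather than $k\le 1$, leaving $k=2,3$ to be excluded by other means.
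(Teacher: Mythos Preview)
Your proof is correct and follows the same strategy as the paper: force $r\ge 4k-1$ via the bound $c(\va,\vb)\le 3kD_0$, then use Langer's estimate for $M(6k)$ to conclude $k<2$. The only difference is organizational --- the paper treats $r\ge 4k$ and $r=4k-1$ as separate cases (solving a quadratic in $r$ for the former), whereas you observe that $c(\va,\vb)=\tfrac{r}{2}(10k-1-r)$ is increasing on the whole admissible range $4k-1\le r\le 5k-1$ and reduce directly to the endpoint $r=4k-1$; your version is a little more streamlined.
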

\begin{proof}
The first two assumptions imply that
\[ c(\va,\vb)=5rk-\frac{1}{2}r(r+1)\]
and  $D_0=r$.

If $r< 4k-1$ then
\[ c(\va,\vb)> 3kr=3kD_0\]
hence we can exclude this case.

If $r\geq 4k$ then the bound for $M(6k)$ from Remark~\ref{rmkMiy} and the inequality  $5rk-\frac{1}{2}r(r+1)\leq M(6k)$ yield
\[ r\leq 5k-1/2-\frac{1}{438}\sqrt{799350k^2-287766k+47961-31974\sqrt{73}k^2+14454\sqrt{73}k)}.\]
An straight-forward calculation shows that the right hand side is strictly smaller than $4k$.


It remains to check the case $r=4k-1$. Then $c(\va,\vb)=12k^2-3k$. Now  $c(\va,\vb)\leq M(6k)$ implies that $k<2$. Hence the only case that might occur is $k=1,r=3$. \end{proof}

\begin{remark} This exceptional case $k=1,t=r=3$ does occur: 
Let $C$ be the dual of a smooth cubic curve. This is a sextic curve with 9 cusps and no further singularities. 
Since $c(\va,\vb)\leq 3kD_0$ it follows that $D_0\geq 3$.  Hence $b_i \in \{4,5\}$ and the $a_i\in \{3,4\}$.

Let $r$ be the number of $b_i$ that equals 5, let $A_4$ be the difference between the number of $i$ such that $b_i=4$ and the number of $i$ such that $a_i=4$, let $A_3$ be minus the number of $i$ such that $a_i=3$. Then we have the following three equalities
\[ r+A_4+A_3+1=0,\; 5r+4A_4+3A_3=0,\; 25r+16A_4+9A_3=18.\]
These equalities come from the following facts: there is one more $a_i$ than $b_i$; we have $\sum a_i=\sum b_i$ and we have $\sum b_i^2-\sum a_i^2=2c(\va,\vb)$.

The only solution to this system of equations is $r=3,A_4=-3,A_3=-1$. In order to actually determine the minimal resolution we need to determine $t$. If $t$ were strictly larger then $r$ then  both $a_4$ and $b_4$ equal $4$ and in particular the resolution is not minimal. Hence $t=r=3$. From this it follows that $b_1=b_2=b_3=5$, $a_1=a_2=a_3=4$ and $a_{t+1}=3$. 
Hence the exceptional case $k=1,r=3$ of the above lemma does actually occur.
\end{remark}

\begin{proposition}\label{propStrongRedHigh} Suppose $(\va',\vb')$ is strongly $k$-admissible for rank $2r$ and degree $D_0$.
Then there exists a strongly $k$-admissible sequence $(\va,\vb)$ for rank $2r$ and degree $D_0$ such that $c(\va,\vb)\leq c(\va',\vb')$ and
\begin{enumerate}
\item $k=1,r=t=3, a_1=a_2=a_3=4,a_4=3$;
\item $r=t$, there exists an integer $w$ between $0$ and $r-1$ such that  $a_1=\dots= a_w=5k-1$; $a_w>a_{w+1}\geq a_{w+2}$, $a_{w+2}=a_{w+3}=\dots=a_{r+1}=D_0$ and $b_1=\dots=b_r=5k$ or
\item there exists an integer $w$ between $0$ and $r-1$ such that $a_1=\dots =a_w=5k-1$; $a_{w+1}=\dots=a_{r+1} =D_0$; $b_1=\dots=b_r=5k$ and $b_i=a_i+1$ for $r<i\leq t$.
\end{enumerate}
\end{proposition}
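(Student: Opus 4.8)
The plan is to take an arbitrary strongly $k$-admissible reduced sequence $(\va',\vb')$ for rank $2r$ and degree $D_0$ and push it, by a sequence of weight-preserving or weight-decreasing modifications, toward one of the three normal forms. The guiding principle is that the constraints (1)--(7) of admissibility, especially $b_i\le 5k$ together with $\#\{i\mid b_i=5k\}\ge r$, force a lot of rigidity at the top of both sequences; and the quantity $c(\va,\vb)=\tfrac12(\sum b_i^2-\sum a_i^2)$ is monotone under replacing a pair $(a_i,b_i)$ by a ``more spread out'' pair with the same value of $b_i-a_i$ summed appropriately. First I would record the elementary moves that preserve admissibility: (i) if $b_i<5k$ for some $i\le r$, one can raise $b_i$ to $5k$ and correspondingly raise some $a_j$ (keeping $\sum a=\sum b$, monotonicity, and $a_j<b_j$), which does not decrease $c$ but keeps condition (7) by assumption $c\le\min(M(6k),3kD_0)$ — so actually I must be careful and instead argue that if (7) still holds after the move we are fine, and if it fails we have already derived the bound we want. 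The cleanest route is: among all strongly $k$-admissible $(\va,\vb)$ for rank $2r$, degree $D_0$, with $c\le c(\va',\vb')$, choose one minimizing, say, $\sum a_i$ (or lexicographically minimizing $(\va)$ from the top); then show the minimizer has the asserted shape.

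**Key steps.** (1) Reduce to $b_1=\dots=b_r=5k$: by condition (6) at least $r$ of the $b_i$ equal $5k$, and since the $b_i$ are weakly decreasing these are $b_1,\dots,b_r$; the remaining $b_i$ (for $r<i\le t$) satisfy $b_i<5k$. (2) Show the remaining $b_i$ must satisfy $b_i=a_i+1$: in a reduced minimal resolution with the strong condition $a_i\le b_{i+1}$, any ``slack'' ($b_i>a_i+1$ with $i>r$) can be removed by simultaneously lowering $b_i$ by one and lowering some $a_j$ with $j>i$ by one, which decreases $c$ and preserves all conditions; iterate. This collapses the tail to the form in case (3), unless the lowering would violate $a_j\ge D_0=a_{t+1}$ or the strong/monotone conditions, in which case we are forced into case (2) with $t=r$ (no tail at all). (3) Analyze the top of $\va$: since $b_i=5k$ and $a_i<b_i$ we have $a_i\le 5k-1$ for $i\le r$. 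Push the $a_i$ down: if $a_1<5k-1$ there is room, and one shows that making the $a_i$ as small as possible subject to the constraints forces them to ``stack'' at the two values $5k-1$ and $D_0$, i.e. $a_1=\dots=a_w=5k-1$ and $a_{w+1}=\dots=a_{r+1}=D_0$, which is case (3); the intermediate possibility where exactly one $a_i$ takes a value strictly between is precisely case (2). (4) The exceptional case: when the tail is empty ($t=r$) and $\va=(5k-1,\dots,5k-1,D_0)$ with $w=r$, the equality $\sum a=\sum b$ forces $r(5k-1)+D_0=5rk$, i.e. $D_0=r$, and then Lemma~\ref{lemexcl} forces $k=1$, $r=3$, giving case (1).

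**Main obstacle.** The hard part will be (2)--(3): verifying that each proposed elementary move actually preserves \emph{all seven} admissibility conditions simultaneously — in particular the strong condition $a_i\le b_{i+1}$ and the monotonicity, which can interact badly when we lower an entry in the middle of a sequence, and the inequality (7) $c\le\min(M(6k),3kD_0)$, which constrains $D_0$ from below (via $c\ge\#Z(I)$-type estimates) and hence limits how far we can push. I expect to need a careful case split on whether $w=0$, $0<w<r$, or $w=r$, and on whether the tail $r<i\le t$ is empty, and to invoke Lemma~\ref{lemexcl} exactly to kill the degenerate endpoint $w=r$, $t=r$. The bookkeeping that the minimizing sequence cannot be reduced further — that no further move strictly decreases $\sum a_i$ (or the chosen lexicographic order) while staying admissible — is what pins down the normal form, and organizing that optimality argument so that it cleanly yields the trichotomy, rather than a messier list, is the delicate point.
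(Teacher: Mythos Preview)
Your overall architecture --- reduce $c$ by a sequence of elementary moves until reaching a normal form --- matches the paper's. But two of your key moves are miscalibrated.

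First, the direction in step (3) is backwards. Since $c(\va,\vb)=\tfrac12(\sum b_i^2-\sum a_i^2)$, with $\sum a_i$ held fixed lowering $c$ means \emph{increasing} $\sum a_i^2$, i.e.\ spreading the $a_i$ to the extremes. The relevant move (the paper's operation (i)) is: take the smallest index $i$ with $a_i<5k-1$ and some $j>i$ with $a_j>D_0$, and replace $(a_i,a_j)$ by $(a_i{+}1,a_j{-}1)$. This pushes one entry \emph{up} toward $5k-1$ and another down toward $D_0$. Your phrasing ``push the $a_i$ down'' and ``make the $a_i$ as small as possible'' describes the opposite, $c$-increasing, move. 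Relatedly, the optimization target ``minimize $\sum a_i$'' is not the right one: the normal forms you are aiming for can have strictly larger $t$ (and hence larger $\sum a_i=\sum b_i$) than the starting sequence.

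Second, and more seriously, the dichotomy in your step (2) is false. Suppose $i>r$, $b_i>a_i+1$, and every $a_j$ with $j\ge i$ already equals $D_0$. Then there is no $a_j$ you can lower without changing $D_0$, yet $t>r$, so you are \emph{not} ``forced into case (2) with $t=r$''. The paper resolves exactly this situation by \emph{lengthening} the sequence: decrease $b_i$ by one and append a new pair $(a_{t+2},b_{t+1})=(D_0,D_0{+}1)$. The resulting sequence is still strongly $k$-admissible and $c$ drops by $b_i-D_0-1>0$. Iterating this is what actually drives the tail into the shape $b_i=a_i+1=D_0+1$ for $i>r$. A companion move --- simultaneously replacing some $a_i=5k-1$ and $b_j=5k$ (with $j>i\ge r$) by $D_0$ and $D_0{+}1$ --- is also needed when $t>r$ but $a_r=5k-1$; this is how the paper arranges $a_r=D_0$ before collapsing the tail. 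Without these length-increasing moves your sketch cannot reach case (3) from a generic starting sequence, and the argument stalls.
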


\begin{proof} We start by setting $\va:=\va'$ and $\vb:=\vb'$. We apply a series of modifications to $\va,\vb$ in order to end up in one of the three above mentioned forms.

First of all we may reduce $\va,\vb$, i.e. there are no pairs $i,j$ such that $a_i=b_j$.
Moreover, from Lemma~\ref{lemexcl} it follows that $D_0<5k-1$.

Recall that $c(\va,\vb)=\frac{1}{2}\left(\sum b_i^2-\sum a_i^2\right)$.
We apply several operations on $\va,\vb$ that fix $r$ and $D_0$, keep the sequence strongly $k$-admissible and reduced and lower the function $c$:
\begin{enumerate}
\item Let $i$ be the smallest index such that $a_i<5k-1$, let $j>i$ such that $a_j>D_0$. Assume that $i<r$, hence $b_{i+1}=5k>a_i+1$. Replace in $\va$, $a_i$ by $a_i+1$ and $a_j$ by $a_j-1$. The new sequence is clearly strongly $k$-admissible (here one uses $i<r$) and has a lower value of $c(\va,\vb)$ (here one uses $a_i>a_j$).
\item If for some $r<i<t$ we have that $b_i-b_t\geq 2$ and $b_i-a_{i-1}\geq 2$ then we can decrease $b_i$ by one and increase $b_t$ by one.
\item If for some $r\leq i<t$ we have that $a_i>D_0$ then we can decrease both $a_i$ and $b_{i+1}$.
\end{enumerate}
It might be that one has to reorder the $a_i$ and $b_i$ after applying one of the above operations or that one has to reduce the sequence. 

\textbf{Step 1: adjust $\va$ such that at most one $a_i$ is different from $5k-1$ and $D_0$.}
Applying the first operation several times brings us in the situation that at most one of the $a_i$ is different from $5k-1,D_0$, or that $a_{r-1}$ equals $5k-1$.
If we are in the latter case and at least two of the $a_i$ are different from $5k-1,D_0$ then $t>r$. In this case we apply the third operation (combined with reducing and sorting if necessary) until either $t=r$ or $a_r=D_0$ holds.
Hence  we are now in the situation that at most one $a_i$ is different from $5k-1,D_0$.

\textbf{Step 2: case $t=r$}.
If $t=r$ then all the $b_i$ equal $5k$. From Lemma~\ref{lemexcl} it follows that either at least two of the $a_i$ are different from $5k-1$ or $k=1,t=r=3$ holds.
Hence we are either in the first or in the second case of the Proposition.

\textbf{Step 3: case $t\neq r$}.
Suppose now that $t>r$. Applying the third operation several times brings us in the situation where all the $a_i$ are either $5k-1$ or $D_0$. 

Suppose now that $a_r=5k-1$. Let $i$ be the largest index such that $a_i=5k-1$, let $j$ be the largest such that $b_j=5k$. Since $\va,\vb$ is strongly $k$-admissible we have that $j>i\geq r$.
Replace in $\va,\vb$ $a_i$ by $D_0$ and $b_j$ by $D_0+1$, and sort $\vb$. Then the new sequence has a lower value of $c$. Iterating this allows us to assume that $b_{r+1}<5k$ and hence that $a_r=D_0$.

Let $i$ be largest index such that $b_i\neq D_0+1$. If $i=r$ then our sequence is of the third from. 
Suppose now that  $i>r$. From $\va,\vb$ we obtain a new strongly $k$-admissible sequence of length $t+1$, by decreasing $b_i$ by one and by setting $b_{t+1}=D_0+1$, $a_{t+2}=D_0$. The new sequence has a lower value of $c$.
Iterating this yields a sequence $\va,\vb$ such that $a_i$ is either $5k-1$ or $D_0$ and $b_i$ is either $5k$ or $D_0+1$, and such that $a_r=D_0$.
\end{proof}

\begin{remark} If $(\va,\vb)$ is $k$-admissible then $c(\va,\vb)\leq \min (M(6k),3ka_{t+1})$ holds. Let $m(d)$ be the smallest integer bigger or equal than $2M(d)/d$. Then the above  mentioned condition can be rephrased as $c(\va,\vb)\leq 3ka_{t+1}$ if $a_{t+1}\leq m(6k)$ and $c(\va,\vb)\leq M(6k)$ if $a_{t+1}\geq m(6k)$.
 \end{remark}

\begin{proposition}\label{prpFirstRed} Suppose that $\va,\vb$ is a strongly $k$-admissible sequence of rank $2r$. Then $r<m(6k)$ if $k>1$ and $r\leq m(6)=3$ if $k=1$.
\end{proposition}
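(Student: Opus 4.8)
The plan is to combine the three normal forms of Proposition~\ref{propStrongRedHigh} with the admissibility inequality $c(\va,\vb)\le\min(M(6k),3kD_0)$ from the definition of $k$-admissibility and with Langer's bound on $M(6k)$. Since replacing $(\va,\vb)$ by the sequence produced in Proposition~\ref{propStrongRedHigh} changes neither $r$ nor $D_0$ and does not increase $c$, we may assume that $(\va,\vb)$ is in one of the three listed forms. In case (1) one has $k=1$ and $r=3=m(6)$, so there is nothing to do; hence we may assume we are in case (2) or (3), in which case $b_1=\dots=b_r=5k$.

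First I record the elementary inequality $m(6k)\le 4k-1$: Langer's explicit estimate (Remark~\ref{rmkMiy}) gives $M(6k)\le\frac{125+\sqrt{73}}{12}k^2-\frac{511+11\sqrt{73}}{292}k\le 12k^2-3k$ for $k\ge2$, and for $k=1$ it gives $M(6)\le 9$ by integrality; in either case $M(6k)\le 12k^2-3k$, so $m(6k)=\lceil M(6k)/(3k)\rceil\le 4k-1$. Put $g(\rho):=\rho(10k-1-\rho)$, which is increasing on $[0,5k-1]$. Then $g(m(6k))=m(6k)\bigl(10k-1-m(6k)\bigr)\ge 6k\cdot m(6k)\ge 6k\cdot\frac{M(6k)}{3k}=2M(6k)$, the first inequality using $m(6k)\le4k-1$ and the second using $m(6k)\ge M(6k)/(3k)$. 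Hence $g(\rho)\ge 2M(6k)$ for every integer $\rho$ with $m(6k)\le\rho\le5k-1$.

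Next I claim that in cases (2) and (3) one has $2c(\va,\vb)\ge g(r)+1$ together with $r\le5k-3$. Write $j:=r-w\ge1$, $\delta:=5k-1-D_0\ge1$, $q:=5k-1-r$; note $D_0=\sum_i(b_i-a_i)\ge\sum_{i\le r}(5k-a_i)\ge r$, so $\delta\le q$. Using $\sum a_i=\sum b_i$ to eliminate the middle generator degrees, a direct computation gives, in case (2),
\[ 2c(\va,\vb)=g(r)+j\delta\bigl(2q-(j+1)\delta\bigr), \]
and, in case (3) (where moreover $t=5k-1-(j+1)\delta$),
\[ 2c(\va,\vb)=g(r)+(j+1)\delta^2-\delta(1+2q+j)+q^2+q. \]
Now the structural constraints of the normal forms enter. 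In case (2), $a_{w+1}\le5k-2$ forces $j\delta\le q-1$, so with $\delta\le q$ one gets $2q-(j+1)\delta\ge1$, hence $q\ge2$ and the correction term is $\ge j\delta\ge1$. In case (3), $t\ge r$ forces $(j+1)\delta\le q$, hence again $q\ge2$; estimating the correction term in the variable $s:=(j+1)\delta\in[2,q]$ one finds it is $\ge q^2j/(j+1)\ge2$. In both cases $q\ge2$ gives $r=5k-1-q\le5k-3$, and $2c(\va,\vb)\ge g(r)+1$.

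Combining the two claims: suppose $r\ge m(6k)$. In case (2) or (3) we then have $m(6k)\le r\le 5k-3$, so $g(r)\ge 2M(6k)$ by the second paragraph, whence $2c(\va,\vb)\ge g(r)+1>2M(6k)$, i.e.\ $c(\va,\vb)>M(6k)$, contradicting the admissibility of $(\va,\vb)$. Therefore $r<m(6k)$ whenever we are in case (2) or (3); since case (1) occurs only for $k=1$ (where it gives $r=3=m(6)$), this proves $r<m(6k)$ for $k>1$ and $r\le m(6)=3$ for $k=1$. The main difficulty is the explicit evaluation of $c(\va,\vb)$ in cases (2) and (3) and the verification that $2c(\va,\vb)\ge g(r)+1$: the algebra is elementary but the slack ``$+1$'' is essentially sharp — the dual of a smooth cubic ($k=1$, nine cusps) realizes $c=M(6)=g(3)/2$ and is exactly case (1) — so the estimates must be carried out with care, and one must also treat a few degenerate sub-cases (e.g.\ $\delta$ small enough that some $b_i$ equals $5k-1$, forcing a further reduction of the sequence) and verify $M(6k)\le12k^2-3k$, which is an equality for $k=1$.
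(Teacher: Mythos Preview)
Your argument is correct, but it follows a genuinely different route from the paper's own proof. The paper does \emph{not} invoke Proposition~\ref{propStrongRedHigh} here at all. Instead it argues directly: assuming $r\ge m(6k)$ and the sequence reduced, one observes that $D_0\ge r\ge m(6k)$, so throughout $\min(M(6k),3kD_0)=M(6k)$ and the constraint $c\le M(6k)$ is the only active one. Then one repeatedly performs the single move ``decrease $a_{t+1}$ by one, increase $a_t$ by one'' (reducing and re-sorting as needed), which strictly lowers $c$ while keeping the sequence strongly $k$-admissible, until one reaches the configuration $a_1=\dots=a_r=5k-1$, $a_{r+1}=r$ of Lemma~\ref{lemexcl}; that lemma forces $k=1$, $r=3$. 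This is a three-line reduction to Lemma~\ref{lemexcl} and avoids any explicit computation of $c$.

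Your approach instead first normalises via Proposition~\ref{propStrongRedHigh}, then evaluates $2c(\va,\vb)$ exactly in cases (2) and (3) and compares it to $2M(6k)$ through the auxiliary quadratic $g(\rho)=\rho(10k-1-\rho)$ and the Langer bound. This works and the algebra checks out (including the two displayed identities for $2c$ and the endpoint analysis giving the correction $\ge q^2j/(j+1)$ in case~(3)). What it buys you is an explicit lower bound $2c\ge g(r)+1$ that is only one unit away from the extremal nine-cusp sextic; in effect you are redoing, in sharper form, part of the optimisation that the paper carries out later in Proposition~\ref{prpmincusp}. The cost is considerably more computation, and you must be careful that the normal forms of Proposition~\ref{propStrongRedHigh} really satisfy $\delta\ge1$ and $a_{w+1}\le5k-2$ (they do, once one chooses $w$ maximal and uses that the output is $k$-admissible, but your final paragraph acknowledges these points only informally). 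The paper's proof sidesteps all of this by pushing everything into Lemma~\ref{lemexcl}.
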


\begin{proof}
Suppose we have a strongly $k$-admissible sequence $\va,\vb$ with $r\geq m(6k)$. Without loss of generality we may assume that $\va,\vb$ is reduced. 

If $D_0=r$ then by Lemma~\ref{lemexcl} it follows that $k=1$ and $r=3$. If $(k,r)\neq(1,3)$ then we have that $D_0>r$, hence  at least two of the $a_i$ are different from $5k-1$. 

By decreasing $a_{t+1}$ by one and increasing $a_t$ by one, we obtain a new sequence, that is again strongly $k$-admissible: the value of $c$ decreases by this operation, and since the new $D_0$ is still larger or equal than $m(6k)$ we have that $\min(M(6k),3ka_{t+1})=M(6k)$. Since the value of $c$ for the old sequence was already smaller than this quantity the value of $c$ for this new sequence is that again. If necessary replace $\va,\vb$ by its reduction.
By iterating this we end up in the case that $a_t=5k-1$ and $D_0=r$. This is impossible by Lemma~\ref{lemexcl}.
\end{proof}

\begin{proposition} \label{prpmincusp} Suppose $\va,\vb$ is a strongly $k$-admissible sequence of rank $2r$. Then 
\[ c(\va,\vb)\geq
\frac{3k}{2} \left(r-1+2k+\sqrt{-r^2+4kr+1-4k+4k^2}\right).
\]
\end{proposition}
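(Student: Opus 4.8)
The strategy is to minimize $c(\va,\vb)$ over the three normal forms provided by Proposition~\ref{propStrongRedHigh}, since passing to such a form does not increase $c$. The exceptional case (1), with $k=1$, $r=t=3$, is handled by a direct numerical check against the claimed bound. For the remaining two cases the sequence is governed by three parameters: the common value $D_0=a_{t+1}$, the number $w$ of indices with $a_i=5k-1$, and (in case (3)) the "tail" behavior $b_i=a_i+1$. The plan is to express $c(\va,\vb)$ in closed form in terms of $k,r,w,D_0$ using the constraint $\sum a_i=\sum b_i$ to eliminate one degree of freedom, then minimize.

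In more detail: in case (2) we have $t=r$, all $b_i=5k$, and the $a_i$ consist of $w$ copies of $5k-1$, one intermediate value, and the rest equal to $D_0$; the relation $\sum a_i = \sum b_i = 5kr$ pins down the intermediate value once $w$ and $D_0$ are chosen, and $c = \frac12(r(5k)^2 - \sum a_i^2)$ becomes an explicit quadratic-type expression. In case (3) the tail indices $r<i\le t$ contribute $b_i^2-a_i^2=(a_i+1)^2-a_i^2=2a_i+1$ to $2c$, so $2c = r(5k)^2 - \big(w(5k-1)^2 + (r+1-w)D_0^2\big) + \sum_{i>r}(2a_i+1)$, and the balancing condition $\sum a_i=\sum b_i$ forces $\sum_{i>r}1 = $ (the deficit), converting $\sum_{i>r}(2a_i+1)$ into something controlled by $D_0$ and $w$ as well. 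After this bookkeeping, in both cases $c$ is a function of the continuous relaxations of $w$ and $D_0$; one computes the critical point, checks it is a minimum, and substitutes back. The square-root term $\sqrt{-r^2+4kr+1-4k+4k^2}$ in the claimed bound is exactly the signature of such a quadratic minimization (it is the discriminant-type quantity $\sqrt{(2k-1)^2 + 4k(r-1) - (r-1)^2}$, up to rearrangement), which strongly suggests the optimum is interior and smooth.

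The main obstacle I expect is the casework at the boundary: the parameters $w$, $D_0$ are integers subject to $0\le w\le r-1$ and $D_0 < 5k-1$ (and $D_0\ge 1$, with the further admissibility constraint $c\le\min(M(6k),3kD_0)$ possibly binding), so the unconstrained minimum of the relaxed quadratic may lie outside the feasible region, in which case one must check that the boundary values still satisfy the asserted inequality. One must also verify that the sequences arising in cases (2) and (3) genuinely achieve (or at least are bounded below by) the stated value simultaneously — i.e. that neither normal form does better than the other — so the final bound is the minimum over both families. I would organize the argument by: (i) reduce via Proposition~\ref{propStrongRedHigh}; (ii) dispose of case (1); (iii) write $2c(\va,\vb)$ explicitly in cases (2) and (3), eliminating the intermediate parameter by $\sum a_i=\sum b_i$; (iv) minimize the resulting expression over real $w\in[0,r-1]$ and $D_0$, obtaining the critical value $D_0 = \tfrac12\big(r-1+2k+\sqrt{-r^2+4kr+1-4k+4k^2}\big)\cdot(\text{const})$-type formula; (v) check the minimum is attained in the feasible range, or handle the boundary, to conclude $c(\va,\vb)\ge \frac{3k}{2}\big(r-1+2k+\sqrt{-r^2+4kr+1-4k+4k^2}\big)$.
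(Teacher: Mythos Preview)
Your overall strategy---reduce via Proposition~\ref{propStrongRedHigh}, dispose of case (1) by direct evaluation, write $c$ explicitly in cases (2) and (3), then minimize---matches the paper's. The gap is in step (iv): you expect the square root to arise from an \emph{interior} critical point of $c$ as a function of $(w,D_0)$, and you treat the admissibility constraint $c\le 3kD_0$ as a boundary nuisance to be checked afterward. In fact the opposite holds. In case (2), once one reduces to $w=r-1$ (the paper shows this is forced when $D_0<\tfrac12(5k+r-1)$), the function
\[
c(\va,\vb)=\tfrac12 r(1-r)-D_0+5kD_0-D_0^{2}+D_0 r
\]
is a \emph{downward}-opening parabola in $D_0$: it has no interior minimum, and is increasing throughout the range $D_0<\tfrac12(5k+r-1)$. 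The lower bound on $c$ therefore comes from the smallest $D_0$ for which the sequence is still $k$-admissible, namely the point where condition (7), $c(\va,\vb)\le 3kD_0$, becomes an equality. Solving this quadratic for $D_0$ gives
\[
D_{0,\min}=\tfrac12\Bigl(2k-1+r+\sqrt{4k^2+4kr-r^2-4k+1}\Bigr),
\]
and the asserted bound is exactly $3kD_{0,\min}$.

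So the constraint is not ``possibly binding''---it is the entire mechanism. If you execute the unconstrained minimization you describe, you will find no interior critical point matching the claimed formula and will be led astray. Once you reorient step (iv) around solving $c=3kD_0$ rather than setting $\partial c/\partial D_0=0$, the rest of your plan goes through: the paper handles case (3) by showing its minimum (attained at $D_0=\tfrac12(r+5k-1)$) also exceeds $3kD_{0,\min}$, and the range $D_0>\tfrac12(5k+r-1)$ in case (2) is disposed of by a convexity estimate showing $c$ is bounded below by its value at $D_0=\tfrac12(5k+r-1)$.
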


\begin{proof}
For fixed $r,D_0$ the minimum value of $c(\va,\vb)$ is attained by a sequence of the form described in
 Proposition~\ref{propStrongRedHigh}.
 
 The first case of Proposition~\ref{propStrongRedHigh}  occurs only for $(k,r)=(1,3)$. In this case $c(\va,\vb)=9$  holds and we have
\[ c(\va,\vb)=9=\frac{3k}{2} \left(r-1+2k+\sqrt{-r^2+4kr+1-4k+4k^2}\right).\]

In the second and third case we will vary $D_0$ and the additional parameter $w$ to determine the minimum value of $c(\va,\vb)$ for fixed $r$.

Consider now sequences of the form (2).

Suppose first that $D_0> \frac{1}{2}(5k+r-1)$. Then $w<r-1$.
Set $w'=(5rk-rD_0-D_0)/(5k-1-D_0)$. From \[B(s-a_{w+1})\leq (w-w')B(s-5k+1)+(1-w+w')B(s-D_0)
\] it follows that
\begin{small}
\begin{eqnarray*} c(\va,\vb)&=& rB(s-5k)+B(s)-wB(s-5k+1)-B(s-a_{w+1})-(r-w)B(s-D_0)\\ &\geq& rB(s-5k)+B(s)-w' B(s-5k+1)-(r+1-w')B(s-D_0)\\&=&\frac{1}{2}(5kD_0+5rk-rD_0-D_0).\end{eqnarray*} 
\end{small}

The right hand side is decreasing in $D_0$. Moreover, the right hand side is precisely $c(\va,\vb)$ for $D_0=\frac{1}{2}(5k+r-1)$. Hence the minimal value for $c(\va,\vb)$ is attained at $D_0=\frac{1}{2}(5k+r-1)$. For this value one has that
\[ c(\va,\vb)=\frac{1}{4}(1-10k+10rk+25k^2-r^2)\]
which is smaller than
\[ 3kD_0=\frac{3}{2}k(r+5k-1).\]
Hence if for any $D_0>\frac{1}{2}(5k+r-1)$ there exist a strongly admissible sequence of type $(2)$ then there exist one with $D_0=\lceil \frac{1}{2}(5k+r-1)\rceil$.

Suppose now that $D_0< \lceil \frac{1}{2}(5k+r-1) \rceil$ and hence that $w=r-1$ holds. Then 
\[ c(\va,\vb)=\frac{1}{2}r(1-r)-D_0+5kD_0-D_0^2+D_0 r.\]
We look now for the smallest $D_0$ such that $c(\va,\vb)\leq 3kD_0$. Call this value $D_{0,\min}$. For $D_0<D_{0,\min}$ our sequence $\va,\vb$ is not $k$-admissible, for $D_0>D_{0,\min}$ we find a higher value of $c(\va,\vb)$.
Solving
\[ 3kD_0=\frac{1}{2}r(1-r)-D_0+5kD_0-D_0^2+D_0 r\]
yields
\[ D_{0,\min}= \frac{1}{2} \left(2k-1+r+\sqrt{4k^2+4kr-r^2-4k+1}\right).\]
One easily checks that $D_{0,\min}<\frac{1}{2}(5k+r-1)$. Hence 
the minimal possible value of $c(\va,\vb)$ is then $3kD_{0,\min}$ which is precisely the bound mentioned in the statement.

Consider now case (3) of Proposition~\ref{propStrongRedHigh}.
 We have $b_1=\dots=b_r=5k$, $b_{r+1}=\dots=b_{t}=D_0+1$, $a_1=\dots=a_w=5k-1$ and $a_{w+1}=\dots=a_{t+1}=D_0$, $w\leq r<D_0$. In order to have $\sum a_i=\sum b_i$ we need
\[t=(5k-1-D_0)(w-r)+D_0.\]
 Note that $t$ is increasing in $w$ and the function $2 c(\va,\vb)$ equals
\[ (D_0+1-5k)(5k-2-D_0)w+D_0-5rk+D_0^2+25rk^2-10rkD_0+D_0r+D_0^2r. \]
This function is either constant or is decreasing in $w$, hence we may take $w$ as large as possible, namely $w=r-1$, and therefore
\[ t=1-5k+2 D_0.\]
 Note that $D_0\geq \frac{1}{2}(r+5k-1)$. Differentiating $c$ with respect to $D_0$ yields that $c$ increases as function of $D_0$ for $D_0\geq \frac{1}{2}(r+5k-1)$. Hence the minimal value for $c$ is attained at $D_0= \frac{1}{2}(r+5k-1)$. In that case we have
\[c(\va,\vb)=\frac{1}{4}(25k^2+2rk-10k-r^2+1).\]
A straightforward calculation shows that $3kD_{0,\min}-\frac{1}{4}(25k^2+2rk-10k-r^2+1)$ has a maximum in $r=2k$, and that for $r=2k$ this function is negative. Hence for each $D_0$ we have that $c(\va,\vb)\geq 3kD_{0,\min}$ which finishes the proof.
\end{proof}

\begin{remark} For $r=1$ we find that $C$ has at least $6k^2$ cusps. If we expand the right hand side of the inequality as a function for $k\to \infty$ we obtain
\begin{align*} \frac{3k}{2} \left(r-1+2k+\sqrt{-r^2+4kr+1-4k+4k^2}\right)&=&\\6k^2+3(r-1)k+\frac{3}{4} r(1-r)+O\left( \frac{1}{k} \right).\end{align*}
\end{remark}

\begin{remark} We can specialize to the case $k=1$. Then we find that to have $r=1$ one needs at least 6 cusps, to have $r=2$ one needs at least 8 cusps, to have $r=3$ one needs at least 9 cusps and for $r>3$ one would need at least 10 cusps. Since a sextic has at most 9 cusps, this is not possible.
 The above mentioned bounds for the minimal number of cusps are sharp, see \cite[Theorem 9.2]{elljcst}.
\end{remark}

\begin{corollary}\label{Corstrongbnd} There is a function $g:\Z_{>0}\to \Z_{>0}$ such that for each $r>g(k)$ there does not exist a strongly $k$-admissible sequence $\va,\vb$  of rank $2r$. Moreover one has for $k\geq 2$ that
\[ g(k) \leq   \frac{1}{2628}\left(-219-33\sqrt{73}+(9125+73\sqrt{73})k-\sqrt{\alpha}\right).\]
where
\[ \alpha=3325734-14454\sqrt{73}+(-11766432+287328\sqrt{73})k+(12267358-564874\sqrt{73})k^2.\]
In particular,
\[ \limsup_{k\to \infty} \frac{g(k)}{k} \leq \frac{1}{36}\left(125+\sqrt{73}-\sqrt{2302-106\sqrt{73}}\right)\approx 2.67\]
\end{corollary}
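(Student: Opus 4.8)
The plan is to combine the lower bound for $c(\va,\vb)$ of Proposition~\ref{prpmincusp} with the upper bound $c(\va,\vb)\le M(6k)$ coming from condition~(7) of $k$-admissibility and Langer's estimate (Remark~\ref{rmkMiy}), together with the inequality $r<m(6k)$ of Proposition~\ref{prpFirstRed}. Let $g(k)$ be the largest $r$ for which there exists a strongly $k$-admissible sequence of rank $2r$ (or $1$ if there is none); this is finite since any such $r$ satisfies $r\le 5k-1$. For $k=1$, Proposition~\ref{prpFirstRed} gives $g(1)\le m(6)=3$, so assume $k\ge 2$. Set $M^*=\frac{125+\sqrt{73}}{12}k^2-\frac{511+11\sqrt{73}}{292}k$ (Langer's bound at $d=6k$), $L=\frac{2M^*}{3k}$ and $u=L+1-2k$. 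A direct computation identifies
\[ \frac{1}{2}\Bigl((u+2k)-\sqrt{-u^2+4ku+12k^2-8k+2}\Bigr)=\frac{1}{2628}\Bigl(-219-33\sqrt{73}+(9125+73\sqrt{73})k-\sqrt{\alpha}\Bigr)=:R(k), \]
so it suffices to show that there is no strongly $k$-admissible sequence of rank $2r$ with $r>R(k)$.

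Suppose such a sequence $(\va,\vb)$ exists with $r>R(k)$. By Proposition~\ref{prpFirstRed} we have $r<m(6k)$, and since $m(6k)\le\frac{M^*}{3k}+1=\frac{L}{2}+1$, a short estimate gives both $r<u$ and $r<2k+\sqrt{8k^2-4k+1}$; the latter makes $q(r):=4k^2+4kr-r^2-4k+1$ strictly positive. Condition~(7) together with Remark~\ref{rmkMiy} gives $c(\va,\vb)\le M(6k)\le M^*$, while Proposition~\ref{prpmincusp} gives $c(\va,\vb)\ge\frac{3k}{2}\bigl(r-1+2k+\sqrt{q(r)}\bigr)$; since $M^*=\frac{3k}{2}L$, dividing by $\frac{3k}{2}$ yields $\sqrt{q(r)}\le u-r$. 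As $r<u$, both sides are nonnegative, so squaring gives $q(r)\le(u-r)^2$, which rearranges to
\[ 2r^2-2(u+2k)r+\bigl(u^2-4k^2+4k-1\bigr)\ge 0. \]
This quadratic in $r$ has positive leading coefficient, and its roots are $r_\pm=\frac{1}{2}\bigl((u+2k)\pm\sqrt{-u^2+4ku+12k^2-8k+2}\bigr)$; hence $r\le r_-=R(k)$ or $r\ge r_+$.

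It remains to rule out the branch $r\ge r_+$, and here I would invoke $m(6k)\le r_+$. Asymptotically $m(6k)\le\frac{L}{2}+1=\frac{125+\sqrt{73}}{36}k+O(1)$, whereas $r_+/k\to\frac{125+\sqrt{73}+\sqrt{2302-106\sqrt{73}}}{36}$, so $m(6k)<r_+$ for all large $k$; the finitely many remaining $k\ge2$ are checked directly from $m(6k)\le\frac{L}{2}+1$ and the exact value of $r_+$. Since $r<m(6k)\le r_+$, the case $r\ge r_+$ cannot occur, so $r\le R(k)$, contradicting $r>R(k)$. Thus $g(k)\le R(k)$. For the asymptotic claim, write $u=vk+O(1)$ with $v=\frac{125+\sqrt{73}}{18}-2=\frac{89+\sqrt{73}}{18}$; one computes $v+2=\frac{125+\sqrt{73}}{18}$ and $-v^2+4v+12=\frac{2302-106\sqrt{73}}{324}$, so
\[ \lim_{k\to\infty}\frac{R(k)}{k}=\frac{1}{2}\Bigl((v+2)-\sqrt{-v^2+4v+12}\Bigr)=\frac{1}{36}\bigl(125+\sqrt{73}-\sqrt{2302-106\sqrt{73}}\bigr), \]
which gives the stated value of $\limsup_{k\to\infty}g(k)/k$.

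The step I expect to be the main obstacle is eliminating the branch $r\ge r_+$: Proposition~\ref{prpmincusp} by itself leaves an interval of ranks between $r_+$ and $u$ unexcluded, and closing it genuinely requires the separate bound $r<m(6k)$ of Proposition~\ref{prpFirstRed}, together with the verification — routine but not entirely automatic, since $m(6k)$ is controlled only through $M^*$ and not through the true value of $M(6k)$ — that $m(6k)<r_+$ for every $k\ge2$. The remaining work is elementary: matching the nested radicals to the closed form in the statement, and checking $q(r)>0$ and $r<u$ on the range $r<m(6k)$.
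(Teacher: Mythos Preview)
Your argument is correct and follows essentially the same route as the paper: combine the lower bound of Proposition~\ref{prpmincusp} with Langer's upper bound for $M(6k)$, obtain a quadratic inequality in $r$, and eliminate the upper root using Proposition~\ref{prpFirstRed}. The only noticeable difference is in the branch elimination: the paper disposes of $r\ge r_+$ by the bare assertion that in that case $r\ge 4k-1\ge m(6k)$, whereas you compare $m(6k)$ with $r_+$ directly via $m(6k)\le L/2+1$ and leave the finitely many small $k$ to a direct check; both amount to the same thing.
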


\begin{proof}
 We know that for fixed $r$ and $k$ we have by the previous Proposition that
\[ c(\va,\vb)\geq
\frac{3k}{2} \left(r-1+2k+\sqrt{-r^2+4kr+1-4k+4k^2}\right).
\]

If $\va,\vb$ is $k$-admissible then
\[ c(\va,\vb)\leq M(k)\leq \frac{125+\sqrt{73}}{12}  k^2- \frac{511+ 11\sqrt{73}}{292} k.\]
Combining both inequalities yields that
\[ r\leq  \frac{1}{2628}\left(-219-33\sqrt{73}+(9125+73\sqrt{73})k-\sqrt{\alpha}\right)\]
or
\[ r\geq  \frac{1}{2628}\left(-219-33\sqrt{73}+(9125+73\sqrt{73})k+\sqrt{\alpha}\right)\]
In the latter case $r\geq 4k-1\geq m(6k)$.
This case is excluded  by Proposition~\ref{prpFirstRed}.
\end{proof}

\section{Admissible case}\label{secAmi} 
We consider now the case  where the resolution of $I$, the ideal of the cusps, yields a sequence $\va',\vb'$ that is $k$-admissible but not strongly $k$-admissible. We show that in this case there is a curve of small degree containing many of the cusps. This yields additional numerical constraints on $\va',\vb'$. We will use these extra constraints to construct a strongly $k$-admissible sequence $\va,\vb$ for the same rank such that $c(\va,\vb)\leq c(\va',\vb')$:
\begin{proposition} \label{Prpnononstrong} Suppose $\va',\vb'$ form a sequence coming from the resolution of the ideal of cusps of a cuspidal curve for rank $2r$. Suppose that $\va',\vb'$ is not strongly $k$-admissible. Then there exists a \emph{strongly $k$-admissible} sequence $\va,\vb$  for rank $2r$ such that $c(\va,\vb)\leq c(\va',\vb')$. 
\end{proposition}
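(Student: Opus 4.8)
The plan is to exploit the failure of the strong admissibility condition: there is an index $i$ with $a_i > b_{i+1}$. Since the $a_j$ and $b_j$ are each descending, this means there is a ``gap'' in degrees where many generators of $I$ appear below a syzygy degree. Concretely, let $i$ be the largest index with $a_i > b_{i+1}$ (so $a_j \le b_{j+1}$ for $j>i$), and let $e := a_i$. First I would consider the subideal $I' \subset I$ generated by all generators of $I$ of degree $< e$ (equivalently, the image of $\oplus_{a_j \ge e+1}\ \mbox{won't work}$—rather the image of the summands $S(-a_j)$ with $a_j \le e-1$). Because $a_i > b_{i+1} \ge b_{i+2} \ge \cdots$, all the syzygies among these low-degree generators live in degrees $\le b_{i+1} < e$, so $I'$ already has a full minimal free resolution supported in degrees below $e$; in particular the quotient $S/I'$ has a length-one resolution and $Z(I')$ is a curve (possibly non-reduced) union a zero-dimensional scheme whose degree is controlled by the truncated $a_j,b_j$.

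Second, I would analyze $Z(I')$ geometrically. The generators of degree $< e$ define a curve $D$ (the $1$-dimensional part), and since $I' \subseteq I$, the curve $D$ passes through all the cusps of $C$. Bounding $\deg D$ from above by $e-1$ and applying Bezout's theorem to $D$ and $C$ exactly as in Proposition~\ref{prpSingLocu} (splitting off common components and using $M(d'')$), I would get that most of the cusps of $C$ lie on the common components of $C$ and $D$, forcing the degree of the zero-dimensional residual part of $Z(I')$ to be small; this is an inequality relating the partial sums $\sum_{a_j \le e-1} a_j^2$, $\sum_{b_j \le e-1} b_j^2$ to a quadratic in $e$ and $k$. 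This extra numerical constraint is not available for a general $k$-admissible sequence and is precisely what lets us do better than the generic bound.

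Third, using this constraint I would surgically rebuild the sequence: replace the ``high'' part of $\va',\vb'$ (the entries $\ge e$, i.e., $a_i$ and the $b_j$ with $j \le i$) by a cheaper configuration. The point is that $c(\va,\vb) = \frac12(\sum b_i^2 - \sum a_i^2)$ is a sum of a ``low'' contribution (fixed by $I'$, now provably small) and a ``high'' contribution; the high contribution can be replaced by one coming from a strongly $k$-admissible tail without increasing $c$, because the obstruction to strong admissibility was exactly the $a_i > b_{i+1}$ jump, and removing it (e.g. by decreasing $a_i$ toward $b_{i+1}$ and correspondingly adjusting, or by splicing in a resolution of the residual points of $Z(I')$) lowers $\sum a_i^2 - \sum b_i^2$ appropriately. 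One must check the seven admissibility axioms for the new sequence: $\sum a = \sum b$ is preserved by construction, the monotonicity is restored by the choice of $e$, the rank condition $\#\{b_i = 5k\} \ge r$ is untouched since those entries live in the high part and $5k > e - 1 \ge a_i$ wait—one must verify the $5k$-level $b_i$'s are not disturbed, which holds because they satisfy $b_i = 5k > b_{i+1}$ only if... this needs care, and the cusp bound $c(\va,\vb) \le \min(M(6k), 3kD_0)$ follows from $c(\va,\vb) \le c(\va',\vb') = \#Z(I)$ together with the fact that the new lowest degree $D_0'$ can be taken $\le D_0$.

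\textbf{Main obstacle.} The hard part will be the bookkeeping in the third step: verifying that the rebuilt sequence satisfies \emph{all} the admissibility conditions simultaneously—especially that the rank condition (number of $b_i$ equal to $5k$) and the Bezout/cusp condition survive the surgery—and that the replacement genuinely does not increase $c$. The geometric input (Bezout plus $M(d)$) is routine given Proposition~\ref{prpSingLocu}; the combinatorial optimization showing the new high part can be chosen strongly $k$-admissible with $c(\va,\vb) \le c(\va',\vb')$, possibly with a case split on whether $e \le m(6k)$ or $e \ge m(6k)$ and on how large $i$ is relative to $r$, is where the real work lies.
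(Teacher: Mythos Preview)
Your overall strategy---pass to the subideal $I'$ generated in low degrees, analyse $Z(I')$ as a curve plus points, apply Bezout, then rebuild the sequence---is exactly the paper's approach. But two concrete points need repair before it can work.

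\textbf{First, the resolution of $I'$ is not what you claim.} You assert that because $a_i>b_{i+1}\ge b_{i+2}\ge\cdots$, ``all the syzygies among these low-degree generators live in degrees $\le b_{i+1}<e$, so $I'$ already has a full minimal free resolution supported in degrees below $e$.'' This is false. Syzygies of $I$ in degree $<e$ are indeed syzygies of $I'$, but $I'$ can (and typically will) have \emph{additional} syzygies in degrees $\ge e$ that are not inherited from $I$; moreover $I'$ need not be saturated, so $S/I'$ can have projective dimension $2$. The paper handles this by writing the minimal resolution of $I'$ as
\[
0\to\bigoplus_{j=1}^{s_0}S(-d_j)\to\bigoplus_{j=1}^{s_0}S(-c_j)\oplus\bigoplus_{i>i_0}S(-b_i)\to\bigoplus_{i>i_0}S(-a_i)\to S\to S/I'\to 0
\]
with $e\le c_j<d_j$, and carries the extra Hilbert-polynomial contribution $h_3=\sum_j\bigl(B(s-c_j)-B(s-d_j)\bigr)$ through the entire optimisation. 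Without this correction your numerical constraint relating the partial sums to $e$ and $k$ is simply wrong.

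\textbf{Second, the choice of index matters.} You take the \emph{largest} $i$ with $a_i>b_{i+1}$; the paper takes the \emph{smallest} such $i_0$. With the smallest choice, the block $a_1,\dots,a_{i_0};b_1,\dots,b_{i_0}$ is itself strongly admissible and contains all of the entries $b_i=5k$ that record the rank. The paper then optimises this top block using the moves from Proposition~\ref{propStrongRedHigh}, reducing the problem to four scalar parameters $A,D_0,D_1,D_2$ subject to a list of thirteen inequalities, and finishes by a (lengthy) elimination argument showing one lands either at $A=D_0$ or $D_0=D_1=D_2$, both of which force strong admissibility. Your choice makes the \emph{bottom} block strongly admissible instead, which leaves the rank-carrying part uncontrolled and makes the surgery you sketch much harder to carry out; in particular your worry that ``the $5k$-level $b_i$'s are not disturbed'' is exactly the issue, and it does not resolve itself automatically.

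In short: the geometric input you outline is correct, but the algebraic claim about $\mathrm{res}(I')$ is a genuine error, and the index convention should be flipped. Once both are fixed you are on the paper's track, but be warned that the remaining combinatorial optimisation (the paper's Steps~2--5) is substantially longer than your sketch suggests.
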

\begin{proof}
Set $\va=\va'$ and $\vb=\vb'$. We are going to modify $\va$ and $\vb$ such that they become strongly admissible.

\textbf{Step 1: Set-up and goal} 

To study non-strongly $k$-admissible sequence we need to introduce some further notation.
Without loss of generality we may assume that $\va,\vb$ is reduced, in particular $a_i\neq b_{i+1}$.

Since the $a_i,b_i$ are not strongly $k$-admissible, there is an $i$ such that $a_i>b_{i+1}$. Let $i_0$ be the smallest index such that $a_{i_0}>b_{i_0+1}$ holds. Let $A=a_{i_0}$ and $D_2=\sum_{i=1}^{i_0} b_i-a_i$.

Define
\[ h_1:=\sum_{i=1}^{i_0} B(s-b_i)-B(s-a_i) \]
and
\[ h_2:=B(s)-B(s-a_{t+1})+\sum_{i=i_0+1}^t B(s-b_i)-B(s-a_i).\]
Then $h_1+h_2$ is the Hilbert Polynomial of $I$. In particular, $\# \Sigma=h_1+h_2$.

Consider now the ideal $I'$ generated by the elements of $I$ of degree strictly less than $A$. Let $f_1$ be the greatest common divisor of the elements of $I'$. Let $I''$ be the ideal generated by the elements of $I'$ divided by $f_1$.  Let $D_1:=\deg(f_1)$.

Now $I''$ defines a zero-dimensional scheme, which is possibly  non-reduced, moreover $I''$ might be not saturated.
If $I''$ were saturated then its resolution has length 1, but otherwise  the resolution might be of length 2. From this it follows that $I'$ has a resolution of length at most $2$. Hence there might exist  an $s_0\in \Z_{\geq 0}$ and $c_j,d_j\in \Z$ for $j=1,\dots,s_0$ such that $A\leq c_j< d_j$ and
\[0\to \oplus_{j=1}^{s_0} S(-d_j) \to\oplus_{j=1}^{s_0} S(-c_j) \bigoplus  \oplus_{i>i_0} S(-b_i) \to \oplus_{i>i_0} S(-a_i) \to S\to S/I'\]
is a minimal resolution of $I'$.
Let $h_3=\sum_{j=1}^{s_0}\left( B(s-c_j)-B(s-d_j)\right)$.

With this notation we have that the Hilbert polynomial of $I$ equals $h_1+h_2$, the Hilbert Polynomial of $I'$ equals $h_2+h_3$ and the Hilbert polynomial of $I''$ equals $h_2+h_3-h_C$, with $h_C(s)=D_1s-\frac{1}{2}D_1(D_1-3)$, the Hilbert polynomial of the ideal $(f_1)$.

We will find some additional restrictions.

Note that the Hilbert polynomial of $I''$ is constant. The coefficient of $s$ in $h_2(s)+h_3(s)$ equals $D_2+\sum d_j-c_j$. Hence $D_1=D_2+\sum (d_j-c_j)$. Since $d_j>c_j$ it follows that $D_2\leq D_1$.
Using $i_0\geq r$ and $D_2=\sum_{i=1}^{i_0} (b_i-a_i)$ we obtain that $D_2\geq r$.

Suppose the curve $f_1=0$ contains more than $3kD_1$ cusps of $C$. Then $C$ and $Z(f_1)$ have a common component. Using the same reasoning as in Proposition~\ref{prpSingLocu} one can show that the common component has non-positive degree. From this it follows that $Z(f_0)$ contains at most $3kD_1$ points of $\Sigma$. Therefore $Z(I'')$ contains at least $c(\va,\vb)-3kD_1$ points of $\Sigma$ and
\[ 3kD_1\geq\# Z(I)-\# Z(I'') \geq h_1+h_2-h_2-h_3+h_C =h_1-h_3+h_C.\]

Note that up to degree $A-1$ the generators and syzygies of $I$ and $I'$ agree. Hence $h_I(A-1)=h_{I'}(A-1)$. Now $h_I(A-1)=h_2(A-1)$ and $h_{I'}=h_{I''}+h_C$. From this we get
\[ 0\leq h_{I''}(A-1)=h_{I'}(A-1)-h_C(A-1)=h_2(A-1)-h_C(A-1).\]

Summarizing we found sequences $\va,\vb,\vc,\vd$ and integers $i_0,D_0,D_1,D_2,A$ such that
(with $h_1,h_2,h_3,h_C$ as above)
\begin{enumerate}
 \item $a_i\geq A$ for $i=1,\dots, i_0$,
\item $b_i<A$ for $i=i_0+1,\dots ,t$.
\item $D_0\leq a_i<b_i\leq 5k$ for $i=1,\dots t$.
\item $b_i=5k$ for $i=1,\dots, r$. 
\item $D_0=a_{t+1}=\sum_{i=1}^t (b_i-a_i)$.
\item $A\leq c_j<d_j$ for $j=1,\dots s_0$.
\item $a_i<b_{i+1}$ for $i=1,\dots,i_0-1$
\item $D_2=\sum_{i=1}^{i_0} (b_i-a_i)$.
\item $r\leq D_2\leq D_1\leq D_0\leq A \leq 5k-1$.
\item $h_1+h_2\leq 3kD_0$.
\item $h_1+h_2\leq \frac{1}{4}(45k^2-9k)$.
\item $h_1-h_3+h_C\leq 3kD_1$.
\item\label{mainrest} $h_2(A-1)\geq h_C(A-1)$.
\end{enumerate}
We want to show that for given $r$, a sequence $\va,\vb,\vc,\vd$ and integers $D_1,D_2,A$ satisfying the above conditions there exists a sequence $\va',\vb'$ with same rank $r$, but that is strongly $k$-admissible and $c(\va,\vb)\geq c(\va',\vb')$. We do this by changing the above mentioned parameters in such a way that $c(\va,\vb)$ decreases and such that in the end we have either $D_0=A$ or $D_0=D_1=D_2$ holds. In the former case we clearly have a strongly $k$-admissible sequence. In the latter case we use that $D_0=D_2$ implies $i_0=t$, hence $a_i<b_{i+1}$ for $i=1,\dots i_0-1=t-1$, which in turn implies that the sequence is strongly $k$-admissible.

\textbf{Step 2: Optimization of $h_1,h_2,h_3$ without changing $D_0,D_1,D_2,A$ and $r$.}

We first optimize our $\va,\vb,\vc,\vd$ without changing $D_0,D_1,D_2,A$ and $r$. Specifically, we  aim at decreasing the constant coefficients of $h_1$ and $h_2$ and at increasing the constant coefficient of $h_3$. Hence at this stage we only have to consider the conditions (1)-(8).

The sequence $a_1,\dots, a_{i_0}, D_1;b_1,\dots,b_{i_0}$ is a strongly $k$-admissible sequence. This means that we can apply the same transformations as in the proof of Proposition~\ref{propStrongRedHigh}, only that we need to impose $a_i\geq A$ for $i=1,\dots, i_0$.

 If for some $i<j\leq i_0$ we have $5k-1> a_i\geq a_j>A$, then we can increase $a_i$ by one and decrease $a_j$ by one then reduce and sort. The new sequence still satisfies the above mentioned conditions, but $c(\va,\vb)$ decreases.
So $a_i\in \{5k-1,A\}$ for all but at most one $i\leq i_0$. 

Suppose $i_0>r$ and for some $i<i_0$ we have that  $A<a_i<5k-1$. Let $j$ be such that $b_j\neq 5k$. If $b_j<a_i$ then we can increase both $a_i$ and $b_j$ by $5k-1-a_i$, and hence all the $a_i\in \{5k-1,A\}$.
If $b_j>a_i$ then we can lower them both with $a_i-A$ and sort the $b_i$ if necessary. From this it follows that  $a_i\in \{5k-1,A\}$ for all $i$.

Suppose $i_0>r$ and $b_{i_0}>A+1$, then we lower $b_{i_0}$ by one and increase the length of our original sequence $\va,\vb$ by adding $A$ to $\va$ and adding $A+1$ to $\vb$. Now sort and reduce.

The optimization of $h_1$ allows us to assume for $i\leq i_0$ that either
\begin{enumerate}
 \item $i_0=r$, $b_i=5k$, $i=1,\dots r$ and there exists a $w\leq r-1$ such that $a_1=\dots a_w=5k-1>a_{w+1}\geq a_{w+2}=\dots =a_r=A$. (If $w=r-1$ then $a_{w+1}=A$. In this case we might disregard $a_{w+2}$, since $w+2>i_0$.)
\item $i_0>r$, $b_i=5k$, $i=1,\dots r$, $b_i=A+1$ for $i=r+1,\dots,i_0$ and there exists a  $w\leq r-1$ such that $a_1=\dots a_w=5k-1$, $a_{w+1}=\dots a_{i_0}=A$.
\end{enumerate}
This description of $\va,\vb$ implies $D_2\geq 5k-A+r-1$ or, equivalently, $A\geq 5k+r-1-D_2$.

Suppose we are in case (1). Then $A$ and $D_2$ determine both $w$ and $a_{w+1}$ hence the function $h_1$ depends only on $A$ and $D_2$. Denote this function by  $h_{1,a}(A,D_2)$. If we are in case (2) then $A,D_2$ and $w$ determine $i_0$. Hence we have a function $h_{1,b}(A,D_2,w)$. Now $h_{1,b}$ is decreasing in $w$, hence we may assume that $w=r-1$. Since 
\[ h_{1,b}(A,D_2,r-1)\leq h_{1,a}(A,D_2) +\frac{1}{2} (A+2-5k)(A-5k-r+d_1+1)\]
we have that the constant polynomial $h_{1,b}-h_{1,a}$ is negative. Hence we may assume that $h_1=h_{1,b}(A,D_2,r-1)$, i.e.,
\[ h_1(s)=-D_2s -r+1+5kr-rA-D_2+D_2A-\frac{15}{2}k+\frac{3}{2}A+\frac{1}{2}A^2+\frac{25}{2}k^2-5kA.\]

The optimization of $h_3$ is relatively easy. First lowering $c_j$ and $d_j$ simultaneously increases the constant coefficient of $h_3$, so we may assume that $c_j=A$ for all $j$.  Suppose that for some $j$ we have $d_j\neq A+1$. We can lower $d_j$ by one as follows: we increase the length of $\vc$ by one by setting $c_{s_0+1}=A$. We increase the length of $\vd$ by setting $d_{s_0+1}=A+1$, and decreasing $d_j$ by one. Then the constant coefficient of $h_3$ increases under this operation. This allows us to assume that $d_j=A+1$ and
\[ h_3=(D_1-D_2)s+(D_1-D_2)(1-A).\]

We can optimize $h_2$  as follows. If for some $j>i_0$ we decrease $b_j$ and $a_j$ simultaneously by one then the constant coefficient of $h_2$ decreases. If we extend $\va,\vb$ by setting $a_{t+2}=D_0$, $b_{t+1}=D_0+1$, and lowering one of the $b_j$ for some $j>i_0$ then $c(\va,\vb)$ decreases. However, we have to stop as soon as $h_2(A-1)=h_C(A-1)$. I.e., this 
allows us to assume that $h_2=\max(h_{2,a},h_{2,b})$ with
\begin{itemize}
\item $h_{2,a}=B(s)-B(s-D_0)+(D_0-D_2)(B(s-D_0-1)-B(s-D_0))=D_2s+ (\frac{1}{2}D_0^2+\frac{1}{2}D_0-D_2(D_0+1)$,
\item $h_{2,b}=D_2s+\frac{1}{2}(D_1-D_1^2)+D_1A-D_2(A-1)$.
\end{itemize}

In the next step we are going to vary $A,D_0,D_1,D_2$ in such a way that if we start with a $k$-admissible sequence, not strongly $k$-admissible and satisfying the conditions (1)-(\ref{mainrest}) then the new sequence is still $k$-admissible, satisfies (1)-(\ref{mainrest}), but might have a lower value of $c(\va,\vb)=h_1+\max(h_{2,a},h_{2,b})$. It turns out that we end up either in the case $A=D_0$ or $D_2=D_0$.

The only remaining variables are $A,D_0,D_1,D_2$, hence we may disregard the conditions (1)-(8). Also we want to minimize $h_1+h_2$, hence condition (11) is a priori fulfilled. By replacing $h_{2}$ by $\max h_{2,a},h_{2,b}$ we forced condition (13) to hold. Hence we need only to consider the conditions (9), (10) and (12). We consider (9) as describing a domain in which the parameters $A,D_0,D_1,D_2$ may vary, and try to minimize $h_1+h_2$ in such a way that (10) and (12) hold.

\textbf{Step 3: Elimination of $A$}

Note that the main conditions we are considering are
\[ h_{1}+h_{2,a} \leq 3kD_0,h_{1}+h_{2,b} \leq 3kD_0, h_{1}+h_{C}-h_3\leq 3kD_1.\]
Since $h_{2,b}+h_3-h_C$ is a constant polynomial, $h_3(A-1)=0$ and $h_{2,b}(A-1)=h_C(A-1)$ it follows that $h_{2,b}=h_C-h_3$. So the left hand side of the second and third inequality agree. Since $D_0>D_1$ we might ignore the second inequality.

Both $h_1+h_{2,a}$ and $h_1+h_{2,b}$ are increasing as a function of $A$, for $A\geq 5k+r-1-D_2$. Hence we might take $A$ as small as possible, which means that either $A=D_0$ or $A=5k+r-1-D_2$. In the first case we are done. So from now on we assume that
\[ A=5k+r-1-D_2.\]

\textbf{Step 4: Elimination of $D_1$}

Substituting $A=5k+r-1-D_2$ yields new functions $h_{1}$, $h_{2,a}$, $h_{2,b}$ and $h_3$.
The function $h_{1}+h_{2,a}$ increases with $D_0$, whereas $h_{1}+h_{2,b}$ is independent of $D_0$. So we might decrease $D_0$ until one of the following three cases occurs:
\[ D_0=D_1, h_{2,b}=h_{2,a} \mbox{ or } h_{1}+h_{2,a}=3kD_0 \]
We claim now that even in the second and in the third case we may also assume that $D_0=D_1$ holds.

If $D_0$ is such that $h_{2,b}=h_{2,a}$ then the only interesting inequalities are
\[ h_{1}+h_{2,b}\leq \min\left(3kD_0,\frac{1}{4}( 45k^2-9k)\right), \; h_{1}+h_{2,b} \leq 3kD_1\]
Now $h_{1}$ and $h_{2,b}$ are independent of $D_0$. Since $D_1<D_0$  we may simplify these  bounds to $h_{1}+h_{2,b}\leq \min(3kD_1, 45k^2-9k)$, which is completely independent of $D_0$. Hence we may decrease $D_0$ such that $D_1=D_0$ since  $c(\va,\vb)$ is increasing in $D_0$ and $D_0$ is not involved in any of the further bounds except $D_1\leq D_0$.

Suppose now that $D_0$ is such that $h_{1}+h_{2,a}=3kD_0$ and $h_{2,a}>h_{2,b}$. Then $3kD_1-h_{1}-h_{2,b}$ is increasing in $D_1$ if $D_2+D_1\geq r+2k$. Now $D_1\geq D_2\geq r$, hence this condition is automatically satisfied as soon as $r\geq 2k$. 

If $D_1+D_2\geq r+2k$ then this implies that we may increase $D_1$ until we reach $D_1=D_0$ or $h_{2,a}=h_{2,b}$. The latter case we can apply the above argument to obtain $D_0=D_1$. So if $D_1+D_2\geq r+2k$ we may assume $D_0=D_1$.

If $D_1+D_2 \leq r+2k$ and $h_{2,a}>h_{2,b}$ then $c(\va,\vb)$ is independent of $D_1$. Hence we might decrease $D_1$, since this decreases $h_{2,b}+h_{1}$ and increases $h_{2,a}-h_{2,b}$. Hence we get in the situation that $D_1=D_2$. The new function $3kD_2-h_{2,b}+h_{1}$ decreases with $D_2$, whereas $c(\va,\vb)$ increases with $D_2$. 
So we may decrease $D_2=D_1$ as long as all lower bounds for $D_2$ are satisfied. However, the only bound for $D_2$ is $D_2\geq r$. Hence we are in the situation that $D_1=D_2=r$. From this it follows that $A=5k-1$.
Substituting this in $h_{1}+h_{2,b}\leq 3kD_1$ yields
\[ -2kn+(1/2)k+(1/2)k^2\geq 0.\]
In particular $r\geq 4k-1$. Since $D_2+D_1\geq r+2k$ implies $r\leq 2k$ this case does not occur.

\textbf{Step 5}

Assume now $D_0=D_1$. Consider the following bounds
\[ h_{1}+h_{2,a}\leq 3kD_0, \; h_{1}+h_{2,b}\leq 3kD_0 \mbox{ and } h_{1}+h_C-h_3\leq 3kD_1\]
The second and third inequality coincide: as remarked before we have that $h_{2,b}=h_C-h_3$, moreover we assumed now that $D_0=D_1$.

Suppose that  $h_{2,b}\leq h_{2,a}$. Then we only need to consider the first inequality $h_{1}+h_{2,a}\leq 3kD_0$. Now $h_{1}+h_{2,a}$ increases with $D_2$ hence in this case we might decrease $D_2$ until either $D_2=r$ holds, or $h_{2,a}=h_{2,b}$ holds.

Suppose that $h_{2,b}\geq h_{2,a}$. Then we only need to consider the first inequality $h_{1}+h_{2,b}\leq 3kD_0$. Now $h_{1}+h_{2,b}$ decreases with $D_2$ hence in this case we might increase $D_2$ until either $D_2=D_1=D_0$ holds or $h_{2,a}=h_{2,b}$ holds.

So we have either $h_{2,a}=h_{2,b}$, $D_2=r$ or $D_0=D_1=D_2$. In the latter case we are done.

\textbf{Subcase $D_2=r$.} If $D_2=r$, then
\[ h_{1}+h_{2,b}=5kD_0-\frac{1}{2}D_0(D_0+1).\]
One of the conditions we need to check is that $h_{1}+h_{2,b}\leq 3kD_0$. This inequality implies that $D_0\geq 4k-1$. Now, $h_1+h_{2,b}$ is increasing in $D_0$ for $D_0<5k$. In particular,
\[ h_{1}+h_{2,b}\geq 5k(4k-1)-\frac{1}{2}(4k-1)4k = 12k^2-3k.\]
This contradicts $h_1+h_{2,b}\leq \frac{45}{4}k^2-\frac{9}{4}k$, unless $k=1$ and $D_0\geq 3$. Now if $k=1$ and $D_0\geq 3$ then $a_i\in\{3,4\}$ and $b_i\in\{4,5\}$. An easy computation shows that  $4-r$ of the $a_i$ equal 3. In particular, we have $b_1=5,b_2=4,a_1=a_2=a_3=3$, $b_1=b_2=5,a_1=4,a_2=a_3=3$ or $b_1=b_2=b_3=5,a_1=a_2=a_3=4,a_4=3$. These are all strongly $k$-admissible.

\textbf{Subcase $h_{2,a}=h_{2,b}$.}
Hence the remaining case is $h_{2,a}=h_{2,b}$. Using that  $D_0=D_1$, $A=5k+r-1-D_2$ it follows that  $h_{2,a}=h_{2,b}$ implies
\[0=(D_0-D_2)(D_0+1-r-5k+D_1).\]
Hence we have either $D_0=D_2$ or $D_0=r-1+5k-D_2=A$. In both case we are done.
\end{proof}

\begin{theorem}\label{mainThm} Suppose $C=Z(f)$ is a reduced degree $6k$ curve with only nodes and ordinary cusps as singularities. Then both twice the exponent of $t^2-t+1$ in the Alexander polynomial of $C$ and the Mordell-Weil rank of the elliptic 3-fold given by $y^2=x^3+f$  are at most
 \[ \frac{1}{1314}\left(-219-33\sqrt{73}+(9125+73\sqrt{73})k-\sqrt{\alpha}\right).\]
where
\[ \alpha=3325734-14454\sqrt{73}+(-11766432+287328\sqrt{73})k+(12267358-564874\sqrt{73})k^2.\]
\end{theorem}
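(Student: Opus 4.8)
The plan is to combine the two previous sections. By Proposition~\ref{prpSyzForm} we have $\rank \MW(\pi) = 2\#\{i\mid b_i = 5k\}$, so writing $2r = \rank \MW(\pi)$ we know that the resolution of the ideal $I$ of cusps produces a $k$-admissible sequence $(\va',\vb')$ for rank $2r$ and lowest degree $D_0$ with $c(\va',\vb') = \#Z(I)$. Thus it suffices to bound $r$. There are two cases. If $(\va',\vb')$ is already strongly $k$-admissible, then Corollary~\ref{Corstrongbnd} directly bounds $r \leq g(k)$ with the stated $g(k)$. If $(\va',\vb')$ is $k$-admissible but not strongly $k$-admissible, then by Proposition~\ref{Prpnononstrong} there exists a strongly $k$-admissible sequence $(\va,\vb)$ for the same rank $2r$ with $c(\va,\vb) \leq c(\va',\vb')$; applying Corollary~\ref{Corstrongbnd} to this sequence again gives $r \leq g(k)$.

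So in both cases $2r \leq 2g(k)$. Recalling from Corollary~\ref{Corstrongbnd} that
\[ g(k) \leq \frac{1}{2628}\left(-219-33\sqrt{73}+(9125+73\sqrt{73})k-\sqrt{\alpha}\right), \]
we obtain
\[ \rank \MW(\pi) = 2r \leq 2g(k) \leq \frac{1}{1314}\left(-219-33\sqrt{73}+(9125+73\sqrt{73})k-\sqrt{\alpha}\right), \]
with $\alpha$ as in the statement. This is exactly the claimed bound for the Mordell-Weil rank.

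Finally, by the remark after Proposition~\ref{prpLinSys} (the result of Cogolludo and Libgober \cite{CogLib}), the exponent of $t^2-t+1$ in the Alexander polynomial of $C$ equals half of $\rank \MW(\pi)$, so twice this exponent equals $\rank \MW(\pi)$, and the same bound applies verbatim. One should also check the small-$k$ edge cases: for $k=1$ the sharper statement $r \leq m(6) = 3$ from Proposition~\ref{prpFirstRed} and the exceptional case analysis (Lemma~\ref{lemexcl} and the following remark) show the bound holds there too, and the hypothesis $k \geq 2$ in Corollary~\ref{Corstrongbnd} is only needed for the closed-form expression, not for the existence of $g$.

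I expect the only real subtlety to be bookkeeping: making sure that the ``lowest degree'' parameter $D_0$ does not obstruct the reduction — but this is precisely handled by the remark preceding Section~\ref{secBnd} and by Proposition~\ref{Prpnononstrong}, which produces a strongly $k$-admissible sequence for the same rank without needing to control $D_0$. Everything else is a direct citation of the assembled results, so the proof is essentially a one-line deduction once the two cases are separated.
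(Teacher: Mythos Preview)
Your proposal is correct and follows essentially the same route as the paper's own proof: reduce to the existence of a strongly $k$-admissible sequence via Proposition~\ref{Prpnononstrong}, then invoke Corollary~\ref{Corstrongbnd}. Your write-up is in fact a bit more careful than the paper's, since you explicitly separate the already-strongly-admissible case from the one requiring Proposition~\ref{Prpnononstrong}, and you address the $k=1$ edge case and the Alexander-polynomial clause that the paper's two-line proof leaves implicit.
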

\begin{proof} Let $r$ be  the Mordell-Weil rank.
By Proposition~\ref{Prpnononstrong} there exists a strongly $k$-admissible sequence of rank $r$.
From Corollary~\ref{Corstrongbnd} it follows that $r$ can be bounded by the  above mentioned quantity.
\end{proof}

\bibliographystyle{plain}
\bibliography{remke2.bib}

\end{document}